\documentclass[12pt]{article}

\usepackage[utf8]{inputenc}
\usepackage[english]{babel}
\usepackage{authblk}
\usepackage{lipsum}
\usepackage{amsfonts}
\usepackage{graphicx}
\usepackage{epstopdf}
\usepackage{algorithmic}
\usepackage{amsopn}
\usepackage{amssymb}
\usepackage{amsmath}
\usepackage{amsthm}
\usepackage{xcolor}
\usepackage{lineno}
\usepackage{hyperref}
\usepackage{geometry}
\usepackage{titlesec}
\usepackage{array} 
\usepackage{multirow} 
\usepackage{url}
\usepackage{nicefrac}
\usepackage{dsfont}
\usepackage{bbm}
\usepackage{enumitem}
\usepackage[normalem]{ulem}
\usepackage[labelfont=bf]{caption}

\hypersetup{colorlinks=true, linkcolor=blue, citecolor=blue}
\titleformat{\section}{\normalfont\large\bfseries}{\thesection.}{0.5em}{}
\titleformat{\subsection}{\normalfont\normalsize\bfseries}{\thesubsection.}{0.5em}{}
\titleformat{\subsubsection}{\normalfont\normalsize\bfseries}{\thesubsubsection.}{0.5em}{}
\numberwithin{equation}{section}
\numberwithin{figure}{section}
\DeclareGraphicsExtensions{.pdf,.png,.jpg}
\graphicspath{{pic/}}

\newtheorem{theorem}{Theorem}[section]

\newtheorem{lemma}[theorem]{Lemma}

\newtheorem{remark}[theorem]{Remark}
\newtheorem{definition}[theorem]{Definition}

\newcommand{\bnull}{\boldsymbol 0}
\renewcommand{\bf}{\boldsymbol f}
\newcommand{\bg}{\boldsymbol g}
\newcommand{\bn}{\boldsymbol n}

\newcommand{\bs}{\boldsymbol s}
\newcommand{\bu}{\boldsymbol u}

\newcommand{\bv}{\boldsymbol v}
\newcommand{\bw}{\boldsymbol w}

\newcommand{\bx}{\boldsymbol x}
\newcommand{\bA}{\boldsymbol A}

\newcommand{\btau}{\boldsymbol\tau}

\newcommand{\btheta}{\boldsymbol\theta}

\newcommand{\bxi}{\boldsymbol\xi}

\newcommand{\bnabla}{\boldsymbol\nabla}

\newcommand{\mC}{\mathbb C}

\newcommand{\mR}{\mathbb R}

\newcommand{\Tin}{T_{\mathrm{in}}}
\newcommand{\Tex}{T_{\mathrm{ex}}}
\newcommand{\Win}{W_{\mathrm{in}}}
\newcommand{\Wf}{W_{f}}
\newcommand{\WO}{W_{0}}
\newcommand{\HD}{H_{\mathcal{D}}}
\newcommand{\GD}{\Gamma_{\mathcal D}}
\newcommand{\GN}{\Gamma_{\mathcal N}}
\newcommand{\GF}{\Gamma_{\mathcal F}}
\newcommand{\GR}{\Gamma_{\mathcal R}}
\newcommand{\Dbox}{D_{\rm box}}
\newcommand{\tf}{{t_f}}

\DeclareMathOperator{\dx}{\operatorname{d}\!\bx}
\DeclareMathOperator{\dt}{\operatorname{d}\! t}
\DeclareMathOperator{\ds}{\operatorname{d}\!\bs}
\DeclareMathOperator{\C}{\operatorname{C}}
\DeclareMathOperator{\J}{\operatorname{J}}
\DeclareMathOperator{\F}{\operatorname{F}}

\DeclareMathOperator{\bI}{\operatorname{\bold I}}
\DeclareMathOperator{\bB}{\operatorname{\bold B}}

\renewcommand{\div}{\operatorname{div}}
\let\oldforall\forall
\renewcommand{\forall}{\oldforall\,}

\title{Shape optimization for thermoelasticity with temperature-dependent material parameters}
\author[$\ast$]{Marc Dambrine}
\author[$\dagger$]{Helmut Harbrecht}
\author[$\dagger$]{Viacheslav Karnaev}
\affil[$\ast$]{Universit\'e de Pau et des Pays de l’Adour, E2S UPPA, 
	CNRS, LMAP, UMR 5142, 64000 Pau, France.}
\affil[$\dagger$]{Departement Mathematik und Informatik, 
	Universit\"at Basel, \newline 4051 Basel, Switzerland}
\date{\today}

\begin{document}
	
\maketitle


\begin{abstract}
We consider the numerical solution of shape optimization 
problems for thermoelasticity with temperature-dependent 
material parameters. We show the existence of the 
shape derivative and derive an expression for generic 
functionals of domain integral type. Numerical results are 
presented for two settings: minimization of the compliance 
under a volume constraint, and minimization of the volume 
under a constraint on the \(L^2\)-norm of the von Mises 
stress. We use the finite element method for solving the 
underlying boundary value problems and the level set 
method for the representation of the actual domain.
\end{abstract}

\section{Introduction}
Mechanical components in the engines of cars, airplanes, 
or helicopters are exposed to significant heat fluxes and 
high temperatures generated by fuel combustion or the 
exhaust of hot gases. Depending on the operating mode 
of the engines, these heat flows can vary significantly. 
Typically, they are higher when the engine is under heavy 
load, for example at the startup, and lower when idling. 

In this article, we aim to incorporate insights into this 
cycle in the design of the structure in question. To this end, 
we will approach the problem from the perspective of 
shape optimization by including a priori information about 
the thermal behaviour in order to derive a volumetric 
shape that is optimized with respect to a predefined quality 
measure. Consequently, the shape optimization problem 
must take two phenomena into account: mechanical and 
thermal effects, though we neglect the heat sources 
generated by the mechanical phenomena. It must 
hence account for the temporal effects of the 
temperature of the external environment.

The mathematical model which underlies the structure 
of interest consists therefore of two coupled boundary 
value problems: the semilinear heat equation and the 
system of thermoelasticity. A weak coupling is assumed: 
the temperature affects the stress field, but mechanical 
deformations do not influence the temperature field. The 
material parameters are assumed to vary with the temperature, 
thereby influencing stress, strain, and heat transfer within the 
material. The elastic body is subjected to a given volume and 
surface forces. It is assumed to be embedded in an external 
medium with which it exchanges heat. The heat exchange is 
modeled via a Robin boundary condition, representing a 
heat flux across the boundary that is proportional to the 
temperature difference between the body and the surrounding 
medium. The mechanical loads are considered known, whereas 
the temperature field of the external medium is assumed to vary 
significantly in both space and time. 

Based on the given thermoelasticity model, we consider the 
optimization of the structure of interest in case of generic 
shape functionals of domain integral type. We compute the 
shape derivative of such functionals and propose a numerical 
shape optimization algorithm that combines the level set 
method to represent the actual domain with the finite 
element method to compute the state and its adjoint. 
We illustrate our approach by optimizing the compliance 
and the $L^2$-norm of the von Mises stress under a 
prescribed volume constraint.

The rest of the article is organized as follows. Section~\ref{sec:problem}
introduces the notation used thereafter and states the thermoelastic
model under consideration as well as the constrained shape optimization 
problem we consider. In Section~\ref{sct:calculus}, we then compute 
the shape derivative of the functionals under consideration. The
numerical realization by the level set method and the finite element 
method is presented in Section~\ref{sec:numerics}. Two numerical 
experiments are performed to illustrate and validate the approach.
Finally, in Section~\ref{sec:conclusio}, we draw the article's 
conclusion.

\section{Problem formulation}\label{sec:problem}
The objective of this section is to introduce the notation and 
the precise formulation of the thermoelasticity model under 
consideration. In contrast to \cite{AJ,DAMBRINE2025}, we 
consider here a nonlinear model which also takes thermal
effects in the material parameters into account. Finally, we 
define the shape optimization problem we want to solve.

\subsection{Notation}
First, we introduce some general notation. Let  \(D\subset 
\mR^d,\ d=2,3\), be a bounded and connected domain with 
smooth boundary \(\partial D\), which is divided into three 
subsets \(\GD\), \(\GN\) and \(\GF\) satisfying
\[
|\GD|, |\GN|, |\GF| > 0\quad\text{such that}\quad
\partial D = \GD\cup\GN\cup\GF.
\]
Hereinafter, we denote by \(\bn\) the outward pointing unit 
normal vector on \(\partial D\) and set \(\GR:=\GN\cup\GF\).

We next introduce the time interval \((0,\tf)\), 
where \(\tf=\text{const}>0\) and consider a 
thermoelastic body, represented by the region 
\(D\). The state of the body is determined by 
the scalar field \(T\) of temperature and the vector field 
\(\bu\) of displacements:
\[
	T(t,\mathbf{x}) \colon (0,\tf)\times D\to \mR, 
	\quad \bu(t,\bx) \colon (0,\tf)\times D\to \mR^d.
\] 

Throughout this article, we use the following notation 
for the deformation tensor 
\[
	\varepsilon(\bu):=\frac{1}{2}(\boldsymbol\bnabla \bu
	+\boldsymbol\bnabla\bu^\top), \quad\text{where}\quad  
	[\boldsymbol\bnabla \bu]_{i,j} := \partial_{x_j} u_i
\]
and, according to the Duhamel-Neumann postulate, 
for the stress tensor
\begin{equation}\label{eq:stress_tnesor}
	\sigma(T, \bu) := \mC(T):\varepsilon(\bu) + (T-\Tin)\bB(T),
\end{equation}
where 
\[
	\mC(T):\varepsilon(\bu) = 2\mu(T)\varepsilon(\bu) 
	+  \lambda(T)\div(\bu)\bI\quad\text{and}\quad\bB(T) 
	= -\alpha(T)\big(3\lambda(T) + 2\mu(T)\big)\bI.
\]
Here, \(\bI\) is the identity matrix, \(\Tin\geq 0\) is the initial 
temperature and the Lam\'e constants are
\[
	\mu(T)=\frac{E(T)}{2\big(1+\nu(T)\big)}\quad\text{and}
\quad\lambda(T)=\frac{E(T)\nu(T)}{\big(1+\nu(T)\big)\big(1-2\nu(T)\big)}.
\]

The properties of the body are completely characterized by 
the symmetric fourth-order stiffness tensor \(\mC(T)\) and the 
matrix \(\bB(T)\), which both depend on the temperature 
by means of temperature dependent material parameters: 
Young's modulus \(E(T) \in C^1(\mR;\mR^+)\), Poisson's ratio 
\(\nu(T)\in C^1\big(\mR; (-1,1/2)\big)\), and the thermal expansion 
coefficient \(\alpha(T)\in C^1(\mR;\mR^+)\). In what follows, 
we denote \(\mR^+ := \{x \in \mR \mid x > 0\}\). Additionally, 
we assume that \(\mC(T)\) satisfies the uniform ellipticity 
condition, i.e., there exist constants \(C_1, C_2 > 0\) such that
\begin{equation}\label{eq:ellipticity_condition}
C_{1}|\bxi|^2 \leq \mC(s):\bxi:\bxi\leq	C_{2}|\bxi|^2  
\quad \forall 0\neq\bxi\in\mR^{d\times d}_\text{sym}, \, s\in\mR. 
\end{equation}

The heat exchange is governed by the thermal conductivity 
\( k(T) \in C^2(\mR;\mR^+) \), the specific heat capacity 
\( \widetilde C(T) \in C^1(\mR;\mR^+) \), and the constant 
mass density \( \widetilde\rho > 0 \) and heat transfer coefficient 
\( \beta > 0 \). We denote the product 
\( \rho(T) := \widetilde{\rho}\,\widetilde{C}(T) \).
Furthermore, we assume that there exist constants 
\(\rho_1,\rho_2,\rho'_1,\rho'_2>0\) and \(k_1,k_2,k'_1,k'_2>0\) 
such that for all \(s \in \mathbb{R}\) it holds
\begin{equation}\label{eq:parabolicity_condition}		
	\begin{gathered}
	 	\rho_1 \leq \rho(s) \leq \rho_2,\quad
		k_1 \leq k(s) \leq k_2, \\
		\rho'_1 \leq \rho'(s) \leq \rho'_2,\quad
		k'_1 \leq k'(s) \leq k'_2.
	\end{gathered}
\end{equation}

Finally, we shall introduce the spaces of admissible solutions
for the boundary value problems under consideration. To this 
end, we define the Bochner spaces of time-dependent 
\(H^1\)-smooth functions: 
\[
	H\big((0,\tf),D\big) := L^2\big((0,\tf); H^1(D)\big)
\]
and
\[
	\HD\big((0,\tf),D\big) := \big\{ u \in H\big((0,\tf),D\big)
	\ | \  u = 0 \text{~on~} (0,\tf)\times\GD\big\}.
\]  
The space \(\HD\big((0,\tf),D\big)^d\) serves as the energy space 
for the displacement field. 

We next introduce the anisotropic space 
\begin{align*}
	W\big((0,\tf),D\big) 
	&:=  L^2\big((0,\tf); H^1(D)\big)
	\cap H^1\big((0,\tf); H^{-1}(D)\big) \\
	&=\big\{T \in  L^2\big((0,\tf); H^1(D)\big) \ | \ 
	\partial_t T \in L^2\big((0,\tf); H^{-1}(D)\big)\big\},
\end{align*}
which we equip with the graph norm. It gives rise
to the energy space for the temperature by means of
\begin{align*}
	\Win\big((0,\tf),D\big) := \big\{ T \in W\big((0,\tf),D\big)  \ | \
	T = \Tin \text{~on~}& (0,\tf)\times \GD \\[1ex]
	&\text{and~} T = \Tin \text{~in~} \{t=0\}\times D \big\}.
\end{align*}
Moreover, we need two more spaces for the local shape 
derivative of the temperature, which incorporates a 
homogeneous initial condition
\begin{equation}\label{eq:w0_space}
	\begin{aligned}
		\WO\big((0,\tf),D\big) := \big\{ T \in W\big((0,\tf),D\big)  \ | \
		T = 0 \text{~on~}& (0,\tf)\times \GD \\[1ex]
		&\text{and~} T = 0 \text{~in~} \{t=0\}\times D \big\},
	\end{aligned}
\end{equation}
and for the adjoint state of the temperature, which 
incorporates a homogeneous terminal condition
\begin{equation}\label{eq:wf_space}
	\begin{aligned}
		\Wf\big((0,\tf),D\big) := \big\{ T \in W\big((0,\tf),D\big)  \ | \
		T = 0 \text{~on~}& (0,\tf)\times \GD \\[1ex]
		&\text{and~} T = 0 \text{~in~} \{t=\tf\}\times D \big\}.
	\end{aligned}
\end{equation}
 
\subsection{Governing equations}
We consider the following model of the thermoelastic body \(D\) 
which combines the nonlinear heat equation with the equations 
of thermoelasticity. The mechanical unknowns of the model are 
the temperature field \(T \in \Win\big((0,\tf),D\big)\) and the displacement 
field \(\bu \in \HD\big((0,\tf),D\big)^d\), which are described by the following 
equations.

\begin{description}
	\item[\(\bullet\)~\textbf{Heat equation.}] 
	\begin{equation}\label{eq:heat}
		\left\{\,
		\begin{aligned}
			\rho(T)\partial_t T - \div\big(k(T)\bnabla T\big) 
			&= Q &&\text{in~~} (0,\tf)\times D, \\[1ex]
			k(T)\bnabla T\cdot \bn + \beta T &= \beta\Tex
			&&\text{on~~} (0,\tf)\times\GR. \\[1ex]
		\end{aligned}
		\right.
	\end{equation}
	where the thermal exchange with the environment is taken into 
	account through the Robin boundary conditions with the external
	temperature \(\Tex \in L^2\big((0,\tf); L^2(\GR)\big)\) such that \(\Tex(0) = \Tin\).
	Additionally, the body is subject to a thermal source \(Q \in L^2\big((0,\tf); 
	L^2(D)\big)\), which can also be time-dependent.
\end{description}
\begin{description}
	\item[\(\bullet\)~\textbf{Thermoelasticity equilibrium system.}] 
	\begin{equation}\label{eq:elasticity}
		\left\{
		\begin{aligned}
			-\div(\sigma(T, \bu)) &= \bf &&\text{in~~} 
			(0,\tf)\times D, \\[1ex]
			\sigma(T, \bu) \bn &= \bg &&\text{on~~} 
			(0,\tf)\times \GN, \\[1ex]
			\sigma(T, \bu) \bn &= \bnull &&\text{on~~} 
			(0,\tf)\times \GF.\\[1ex]
		\end{aligned}
		\right.
	\end{equation}
	where \(\bf\in L^2\big((0,\tf); L^2(D)^d\big)\) is the body force acting 
	throughout the domain \(D\), and \(\bg \in L^2\big((0,\tf); L^2(\GN)^d\big)\) 
	is the surface traction applied on the boundary part \(\GN\). 
	The body is assumed to be fixed on \(\GD\) and free 
	on the remaining part \(\GF\). The illustration of the 
	model can be found in Figure \ref{fig:model}.
\end{description}

\begin{figure}[hbt]
	\centering
	\includegraphics[width=0.4\linewidth]{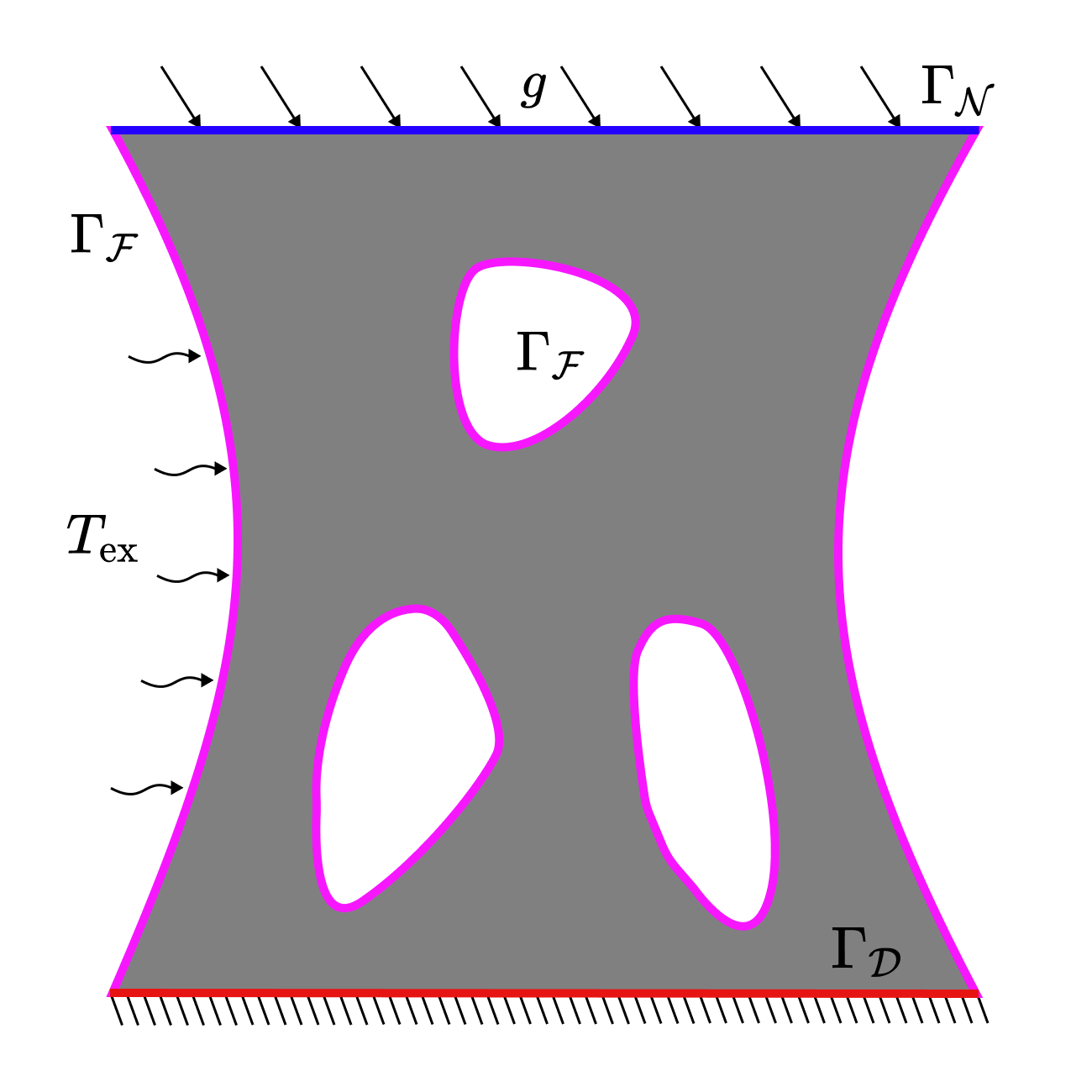}
	\caption{Illustration of the model for the thermoelastic body.}
	\label{fig:model}
\end{figure}

The variational formulation of the problem
\eqref{eq:heat} reads as follows: find \(T \in \Win\big((0,\tf); D\big)\) 
such that
\begin{equation}\label{eq:var_heat}
	\begin{aligned}
		\int_0^\tf \int_D \rho(T)\partial_t T S\dx\dt + 
		 \int_0^\tf\int_Dk(T)\bnabla T\cdot\bnabla S\dx\dt 
		+ \int_0^\tf\int_{\GR} \beta T S\ds\dt&  \\[1ex]
		= \int_0^\tf\int_{D} QS\dx\dt + \int_0^{\tf}\int_{\GR} 
		\beta\Tex S\ds\dt \quad \forall S\in \HD\big((0,\tf),D\big).&
	\end{aligned}
\end{equation}
The variational formulation of the problem \eqref{eq:elasticity} 
reads: find \(\bu\in \HD\big((0,\tf),D\big)^d \) such that 
\begin{equation}\label{eq:var_elasticity}
	\begin{aligned}
		\int_0^\tf\int_D\sigma(T, \bu) : \varepsilon(\bv)\dx\dt 
		= 	\int_0^\tf\int_D \bf\cdot\bv\dx\dt
		+ 	\int_0^\tf\int_{\GN} &\bg\cdot\bv\ds\dt\\[1ex]
		&\forall  \bv \in \HD\big((0,\tf),D\big)^d 
	\end{aligned}
\end{equation}

Under the assumption \eqref{eq:parabolicity_condition}, 
the variational equation \eqref{eq:var_heat} under consideration 
admits a unique solution \(T \in \Win\big((0,\tf),D\big)\), see 
\cite[Thm.\ 8.9]{roubivcek2005nonlinear}. Note that it can 
be solved independently of \eqref{eq:var_elasticity}. Thus, 
according to the definition \eqref{eq:stress_tnesor} of the 
stress tensor, the term \((T-\Tin)\bB(T)\) appears as a forcing 
term in \eqref{eq:var_elasticity}. Consequently, under the 
assumption \eqref{eq:ellipticity_condition}, the quasi-static 
problem \eqref{eq:var_elasticity} also admits a unique solution 
\(\bu \in \HD\big((0,\tf),D\big)^d\), see \cite[Thm.\ 4.4]{boccardo2013elliptic}. 
By applying Green’s formulas, it is easy to verify that the solutions 
to the variational problems \eqref{eq:var_heat} and 
\eqref{eq:var_elasticity} are equivalent to the strong formulations 
\eqref{eq:heat} and \eqref{eq:elasticity}, respectively.
Especially, one can show that
\begin{equation}\label{eq:temp_regularity}
	T\in L^\infty\big((0,\tf); H^1(D)\big) \cap L^2\big((0,\tf); H^2(D)\big) 
	\quad\text{and}\quad\partial_t T \in L^2\big((0,\tf); L^2(D)\big),
\end{equation}
see \cite[Chpt.\ 8.8.3]{roubivcek2005nonlinear}.

\subsection{Shape optimization problem}
We are interested in the optimal shape of a given body described 
by the model above. By optimal shape, we mean the one that 
minimizes an objective functional \(J(D)\) under prescribed 
constraints \(C(D)\) which we define as integral functionals 
of general form:
\[
	\J(D) := \int_0^\tf\int_Dj(T_D, \bnabla T_D, \bu_D,
	\bnabla\bu_D)\dx\dt
\]
and
\[
	\C(D) := \int_0^\tf\int_D c(T_D, \bnabla T_D, \bu_D,
	\bnabla\bu_D)\dx\dt.
\]
Here, \(T_D\in\Win\big((0,\tf),D\big)\) and \(\bu_D\in \HD\big((0,\tf), D\big)^d\) 
denote the solutions to \eqref{eq:var_heat} and \eqref{eq:var_elasticity}, 
respectively, on the domain \(D\). The functions \(j(\cdot)\) and \(c(\cdot)\) 
are required to be at least of class \(C^1\) with respect to the state 
variables and their gradients, and that they belong
\(L^1\big((0,\tf); L^1(D)\big)\).

We shall consider the minimization of the objective function 
\(\J(D)\) with a constraint on \(\C(D)\) not to exceed a desired
constant threshold \(\tau>0\). It is assumed that we are looking 
for an optimal structure \(D\) which is contained in some bounded 
reference domain \(\Dbox\). Thus, the problem formulation becomes
\begin{equation}\label{eq:det_opt_problem}
	\underset{D\subset\Dbox}{\text{minimize}}\quad \J(D)
	\quad\text{subject to}\quad \C(D)\leq \tau.
\end{equation}
For the sake of simplicity, we consider that the boundaries 
\(\GD\) and \(\GN\) are non-optimizable, i.e.~fixed, which is also 
reasonable from the application point of view.

\begin{remark}
It should be noted that shape optimization problems are often 
ill-posed. We refer the reader to \cite[Sct.~3.1]{allaire2021shape} 
for an instructive example of non-existence of solutions. Even 
in cases where the shape optimization problem lacks a solution, 
there is still a significant practical value. Engineers frequently 
want to develop a component design that is better, but also 
close to the current one without necessarily achieving optimality.
\end{remark}

\section{Shape calculus}\label{sct:calculus}
Our next focus is the computation of the shape derivative
of the functionals in the optimization problem 
\eqref{eq:det_opt_problem}  under consideration. We use the 
traditional method, in which the deformations of \(\partial D\) 
are parameterized by means of a vector field 
\(\btheta \in W^{1,\infty}(\Dbox; \mR^d)\) and then the functionals 
are differentiated in the Fréchet sense with respect to \(\btheta\). 
To this end, we introduce Lagrangian differentiation and 
calculate the derivatives of the temperature \(T_D\) and the 
displacement field \(\bu_D\). Then, we establish the shape 
derivative for a generic shape functional of domain integral
type. For details about shape calculus, we refer to \cite{allaire2021shape,
henrot2018shape,sokolowski1992introduction} and the 
references therein.
	
\subsection{Basic identities}
First, we need to recall the concept of shape differentiability.	
Let us consider a vector field \(\btheta\in W^{1,\infty}(\Dbox; \mR^d)\) 
such that 
\begin{itemize}
\item the mapping \((\text{Id} + \btheta)\) diffeomorphically 
takes the domain \(\Dbox\) onto itself,
\item there holds \(\btheta = \bnull\) on \(\GD\cup\GN\).
\end{itemize}
We define the family \(D_{\btheta}\) of domains 
by setting \(D_{\btheta} := (\text{Id}+\btheta)(D)\). 


\begin{definition}[Lagrangian derivative]\label{def:lagrange_deriv} 
Let \(D \subset \mR^d\) be a bounded, Lipschitz domain and
\(T_D:(0,\tf)\times D\to\mR\) be an associated function. 
The mapping \(T_D\) has a Lagrangian derivative at a particular 
shape \(D\) if the transported function
\[
\widehat{T}_D(\btheta) := T_{D_{\btheta}}\circ 
(\text{{\rm Id}}+\btheta)
\]
is Fr\'echet differentiable at \(\btheta  = \bnull\). Its 
Fr\'echet derivative \(\dot T_D (\btheta)\) is called 
the Lagrangian derivative of \(T_D\).
\end{definition}
A similar definition can be introduced for a vector field 
\(\bu_D:(0,\tf)\times D\to\mR^d\).

In the subsequent computations, we shall frequently 
use the surface divergence of a vector field, which 
is defined by \(\div_{\btau}(\btheta) := \div(\btheta) - 
(\bnabla\btheta\bn)\cdot\bn\). Moreover, we denote 
by \(J(\btheta) := |\det(\bI + \bnabla \btheta)|\) 
the Jacobian determinant associated with the mapping 
\((\text{Id}+\btheta)\). 

With this at hand, we can formulate the following lemma
which characterizes the Lagrangian derivative of the 
temperature field.

\begin{lemma}\label{lem:deriv_temp}
	Let \(D\subset \mR^2\), then there exists the Lagrangian derivative 
	\(\dot T_D(\btheta)\in W_0\big((0,\tf), D\big)\) of the solution 
	\(T_D\in \Win\big((0,\tf), D\big)\) to \eqref{eq:heat}. It satisfies
	the boundary value problem
	\begin{equation}\label{eq:deriv_temp}
		\left\{\;\begin{aligned}
			&\rho(T_D)\partial_t\dot T_D(\btheta)
			-\div\big(k(T_D) \bnabla \dot T_D(\btheta)\big) 
			- \div\big(k'(T_D)\dot T_D(\btheta)\bnabla T_D\big)\\
			&\hspace{52mm}+\rho'(T_D)\partial_t T_D \dot T_D(\btheta) 
			= \widetilde{F}_D(\btheta)&&\text{~~in~~} (0,\tf)\times D, \\[1ex]
			&k(T_D) \bnabla \dot T_D(\btheta)\cdot \bn  = 0  
			 &&\text{~~on~~} (0,\tf)\times\GN, \\[1ex]
 			&k(T_D) \bnabla \dot T_D(\btheta)\cdot \bn 
			 + \beta\dot T_D(\btheta)+ \beta\dot T_D(\btheta)
			 =  \widetilde{G}_D(\btheta)  &&\text{~~on~~} (0,\tf)\times \GF, \\[1ex]
		\end{aligned}\right.
	\end{equation}
	where 
	\begin{equation}\label{eq:deriv_temp_not}
		\begin{gathered}
				\widetilde{F}_D(\btheta) =  \div(Q\btheta) 
				- \div(\btheta)\rho(T_D)\partial_t T_D 
				+ \div\big(k(T_D)\bA'(\btheta)\bnabla T_D\big) , \\[1ex]
				\widetilde{G}_D(\btheta) = \beta\big(\div_{\btau}(\Tex\btheta) - \div_{\btau}(\btheta) T_D\big)
				- k(T_D)\big(\bA'(\btheta)\bnabla T_D\big)\cdot \bn,\\[1ex]
				\bA'(\btheta) := \div(\btheta)\bI - \bnabla\btheta 
				- (\bnabla\btheta)^\top.
		\end{gathered}
	\end{equation}
\end{lemma}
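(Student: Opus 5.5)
We follow the classical transport approach via the implicit function theorem. For a fixed admissible \(\btheta\), we pull the variational formulation \eqref{eq:var_heat} posed on \(D_{\btheta}\) back to the reference domain \(D\) by the change of variables \(\bx\mapsto \bx+\btheta(\bx)\). Writing \(\widehat T=\widehat T_D(\btheta)\) and using test functions \(S\circ(\text{Id}+\btheta)\), we obtain
\[
\int_0^\tf\!\!\int_D \rho(\widehat T)\partial_t\widehat T\, S\,J(\btheta)\dx\dt
+\int_0^\tf\!\!\int_D k(\widehat T)\bA(\btheta)\bnabla\widehat T\cdot\bnabla S\dx\dt
+\int_0^\tf\!\!\int_{\GR}\beta\,\widehat T S\,\omega(\btheta)\ds\dt
= \int_0^\tf\!\!\int_D Q\circ(\text{Id}+\btheta)\,S\,J(\btheta)\dx\dt+\int_0^\tf\!\!\int_{\GR}\beta\,\Tex\circ(\text{Id}+\btheta)\,S\,\omega(\btheta)\ds\dt .
\]
Here \(\bA(\btheta)=J(\btheta)(\bI+\bnabla\btheta)^{-1}(\bI+\bnabla\btheta)^{-\top}\) and \(\omega(\btheta)=J(\btheta)|(\bI+\bnabla\btheta)^{-\top}\bn|\) is the surface Jacobian. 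Because \(\partial_t\) commutes with the time-independent transport, the initial and Dirichlet conditions are preserved, so \(\widehat T-\Tin\in\WO\). This defines a map
\[
\mathcal F:\ W^{1,\infty}(\Dbox;\mR^d)\times \WO\big((0,\tf),D\big)\to \big(\HD((0,\tf),D)\big)',\qquad \mathcal F(\btheta,\widehat T-\Tin)=0 .
\]

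\textbf{Differentiability of \(\mathcal F\).} The maps \(\btheta\mapsto J(\btheta),\bA(\btheta),\omega(\btheta)\) are Fréchet differentiable into \(L^\infty\). Their derivatives at \(\bnull\) are \(\div\btheta\), \(\bA'(\btheta)\) as in \eqref{eq:deriv_temp_not}, and \(\div_{\btau}\btheta\). For the data we use \(Q\circ(\text{Id}+\btheta)\,J\), whose derivative is \(\div(Q\btheta)\); for \(\Tex\) on the boundary the derivative is \(\div_{\btau}(\Tex\btheta)\). This requires slightly more regularity of \(Q,\Tex\) in space, or else should be interpreted in the weak form, as is usual. The delicate part is differentiability in the state variable. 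The Nemytskii operators \(T\mapsto\rho(T)\partial_tT\) and \(T\mapsto k(T)\bnabla T\) are not differentiable on \(L^2(H^1)\) in general. Here the restriction \(D\subset\mR^2\) and the regularity \eqref{eq:temp_regularity} enter. In two dimensions \(H^1\hookrightarrow L^p\) for all \(p<\infty\) and \(H^2\hookrightarrow L^\infty\). Together with the bounds \eqref{eq:parabolicity_condition} on \(\rho',k'\) and \(k\in C^2\), this lets us control products such as \(\rho'(T)\partial_tT\,\delta T\) and \(k'(T)\delta T\,\bnabla T\) via Hölder and Ladyzhenskaya-type inequalities. We would therefore work in the smaller space \(L^\infty(H^1)\cap L^2(H^2)\cap H^1(L^2)\) with homogeneous conditions, where the states live, and show \(\mathcal F\) is \(C^1\) there.

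\textbf{Invertibility and identification.} The partial derivative \(\partial_T\mathcal F(\bnull,T_D)\) is the linearized operator on the left of \eqref{eq:deriv_temp}:
\[
\dot T\mapsto \rho(T_D)\partial_t\dot T+\rho'(T_D)\partial_tT_D\,\dot T-\div\big(k(T_D)\bnabla\dot T+k'(T_D)\dot T\bnabla T_D\big),
\]
with the Robin term \(\beta\dot T\) on \(\GR\). To prove it is an isomorphism, we use the theory of linear parabolic equations with bounded leading coefficient \(k(T_D)\ge k_1\) and \(\rho(T_D)\ge\rho_1\). The lower-order coefficients are in suitable \(L^q\) classes by the two-dimensional embeddings. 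Existence and uniqueness in \(\WO\) follow from Galerkin approximation and an energy estimate with Gronwall's lemma. The main effort goes into absorbing the terms \(k'(T_D)\dot T\bnabla T_D\cdot\bnabla\dot T\) and \(\rho'\partial_tT_D\dot T^2\) with \(\bnabla T_D\in L^\infty(L^2)\cap L^2(H^1)\) and \(\partial_tT_D\in L^2(L^2)\), using Gagliardo–Nirenberg. This is where I expect the main obstacle, and why \(d=2\) is assumed. Higher regularity of the solution then follows as for \eqref{eq:temp_regularity}.

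Given these two steps, the implicit function theorem yields Fréchet differentiability of \(\btheta\mapsto\widehat T_D(\btheta)\), with derivative \(\dot T_D(\btheta)\in\WO\). Differentiating the transported weak form at \(\btheta=\bnull\) produces the linearized operator acting on \(\dot T_D\), plus the source terms
\[
\div(Q\btheta)-\div(\btheta)\rho(T_D)\partial_tT_D
\]
and \(-k(T_D)\bA'(\btheta)\bnabla T_D\cdot\bnabla S\). The boundary term splits into \(\beta\div_{\btau}(\Tex\btheta)-\beta\div_{\btau}(\btheta)T_D\), which vanishes on \(\GN\) since \(\btheta=\bnull\) there. Integrating by parts with Green's formula converts the \(\bA'\) term into \(\div(k\bA'\bnabla T_D)\) in the domain and \(-k(\bA'\bnabla T_D)\cdot\bn\) on the boundary. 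This recovers \(\widetilde F_D\) and \(\widetilde G_D\) in the strong form \eqref{eq:deriv_temp}.
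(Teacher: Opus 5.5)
Your proposal follows the paper's proof essentially step for step. You transport the weak form to $D$ and apply the implicit function theorem, showing that $\partial_T\mathcal F(\bnull,T_D)$ is an isomorphism via an energy estimate: the terms $\rho'(T_D)\partial_t T_D\,Z$ and $k'(T_D)Z\bnabla T_D$ are absorbed by the two-dimensional Ladyzhenskaya inequality, which leaves an $L^1(0,\tf)$ coefficient in front of $\|Z\|^2_{L^2(D)}$. You then identify $\dot T_D(\btheta)$ by differentiating the transported form and using Green's formula.

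The paper works directly on $W\big((0,\tf),D\big)$ and never addresses the $C^1$-dependence of $\mathcal F$ on the state that you raise. If you do move to the smaller space $L^\infty(H^1)\cap L^2(H^2)\cap H^1(L^2)$, the implicit function theorem requires $\partial_T\mathcal F(\bnull,T_D)$ to be an isomorphism from that space onto a matching target. Your Galerkin argument only gives the isomorphism $\WO\to\big(\HD((0,\tf),D)\big)'$, so it does not supply this by itself.
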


\begin{proof}
	We present the proof in three steps.
	
	{\it Step 1.} First, we derive the variational formulation of \eqref{eq:heat} 
	for the transported mapping \(\widehat{T}_D(\btheta) 
	= T_{D_{\btheta}} \circ (\text{Id} + \btheta)\). To achieve this, we 
	start from the associated variational formulation \(\eqref{eq:var_heat}\) 
	on the perturbed domain \(D_{\btheta}\): 
	\begin{align*}
		\int_0^\tf \int_{D_{\btheta}}\rho(T_{D_{\btheta}})
		\partial_t T_{D_{\btheta}} S\dx\dt 
		+ \int_0^\tf\int_{D_{\btheta}} k(T_{D_{\btheta}})
		\bnabla T_{D_{\btheta}}\cdot\bnabla S\dx\dt 
		+ \int_0^\tf\int_{{\GR}_{\btheta}} \beta T_{D_{\btheta}} 
		S\ds\dt& \\[1ex]
		=\int_0^\tf\int_{D_{\btheta}} QS\dx\dt
		+ \int_0^\tf\int_{{\GR}_{\btheta}} \beta\Tex S\ds\dt
		\quad \forall S\in H_{\mathcal{D}}\big((0,\tf),D_{\btheta}\big).&
	\end{align*}
	Note that only the part \(\GF\) of the boundary is perturbed, 
	since \(\btheta = \bnull\) on \(\GD\cup\GN\). We 
	transport this formulation back to the original domain \(D\)
	by using the chain rule and a change of variables:
	\begin{equation}\label{eq:deriv_temp_proof}
		\begin{aligned}
			\int_0^\tf \int_D&\rho\big(\widehat{T}_D(\btheta)\big)
			\partial_t \widehat{T}_D(\btheta) S
			J(\btheta)\dx\dt \\[1ex]
			&+ \int_0^\tf \int_D k\big(\widehat{T}_D(\btheta)\big)\bA(\btheta)
			\bnabla\widehat{T}_D(\btheta)\cdot\bnabla S\dx\dt 
			+ \int_0^\tf \int_{\GR} \beta a(\btheta)
			\widehat{T}_D(\btheta) S\ds\dt \\[1ex] 
			&\hspace{12mm}= \int_0^\tf \int_D\ \widehat{Q}(\btheta)S
			J(\btheta)\dx\dt +  \int_0^\tf \int_{\GR} \beta 
			a(\btheta)\widehat{T}_\mathrm{ex}(\btheta) S \ds\dt \\[1ex]
			&\hspace{96mm}\forall S\in \HD\big((0,\tf),D\big),
		\end{aligned}
	\end{equation}
	where
	\[
		\bA(\btheta) := J(\btheta)(\bI 
		+\bnabla\btheta)^{-1}(\bI +\bnabla\btheta)^{-\top}
	\]
		and
	\[
		a(\btheta) := \|(\bI +\bnabla\btheta)^{-1}\bn\| 
		J(\btheta).
	\]
	
	We next introduce the operator 
	\[
	\mathcal{A}(\btheta,T): W^{1,\infty}(\Dbox; \mR^d)\times 
	W\big((0,\tf),D\big) \to L^2\big((0,\tf);H^{-1}(D)\big),
	\]
	defined by
	\begin{align*}
		\mathcal{A}(\btheta,T)\langle S \rangle := 	
		\int_0^\tf \int_D&\rho(T)\partial_t T S
		J(\btheta)\dx\dt\\[1ex] 
		&+ \int_0^\tf \int_D k(T)\bA(\btheta)\bnabla T
		\cdot\bnabla S\dx\dt + \int_0^\tf \int_{\GR}\beta 
		a(\btheta)T S\ds\dt, 
	\end{align*}
	and \(b(\btheta):  W^{1,\infty}(\Dbox; \mR^d) \to 
	L^2\big((0,\tf);H^{-1}(D)\big)\), defined by
	\[
		b(\btheta)\langle S \rangle := 
		\int_0^\tf \int_D\ \widehat{Q}(\btheta)S
		J(\btheta)\dx\dt + \int_0^\tf \int_{\GR} 
		\beta a(\btheta) \widehat{T}_\mathrm{ex}(\btheta) S\ds\dt.
	\]
	Thus, we can rewrite the identity \eqref{eq:deriv_temp_proof} 
	as 
	\[
		\mathcal{F}\big(\btheta,\widehat{T}_D(\btheta)\big)\langle S \rangle 
		= 0 \quad \forall S \in \HD\big((0,\tf),D\big),
	\]
	where
	\[
		\mathcal{F}(\btheta,T) := \mathcal{A}(\btheta,T) - b(\btheta).
	\]
	Hence, the partial derivative 
	\[
		\partial_T\mathcal{F}(0,T_D)
		: W\big((0,\tf),D\big) \to L^2\big((0,\tf);H^{-1}(D)\big) 
	\] 
	is characterized by the linear operator 
	\begin{align*}
		\partial_T \mathcal{F}(0,T_D)\langle Z, S \rangle 
		= &\int_0^\tf\int_D\big(\rho'(T_D)Z\partial_t T_D
		+ \rho(T_D)\partial_t Z\big)S\dx\dt \\[1ex]
		&+\int_0^\tf \int_D\big(k'(T_D) Z \bnabla T_D
		\cdot\bnabla S + k(T_D)\bnabla Z\cdot\bnabla S \big)\dx\dt \\[1ex]
		&+ \int_0^\tf\int_{\GR} \beta Z S\ds\dt.
	\end{align*}
	
	{\it Step 2.} Our next goal is to prove that the operator 
	\(\partial_T\mathcal{F}(0,T_D)\) is an isomorphism. 
	To this end, we introduce the bilinear form
	\begin{equation}\label{eq:deriv_temp_proof4}
		\begin{aligned}
			\mathcal{B}_t(t)\langle Z, S \rangle  := 
			&\int_D\big(\rho'(T_D)Z\partial_t T_DS
			+ k'(T_D) Z \bnabla T_D
			\cdot\bnabla S + k(T_D)\bnabla Z\cdot\bnabla S \big)(t)\dx \\[1ex]
			&+ \int_{\GR} \beta Z(t) S(t)\ds.
		\end{aligned}
	\end{equation}
	and set
	\begin{equation}\label{eq:BLF}
		\mathcal{B}\langle Z, S \rangle := 
		\int_0^{\tf} \mathcal{B}_t(t)\langle Z, S \rangle\dt = 
		\partial_T\mathcal{F}(0,T_D)\langle Z, S \rangle.
	\end{equation}
	
	By the Cauchy-Schwarz inequality and the assumptions 
	\eqref{eq:parabolicity_condition}, we obtain
	\begin{equation}\label{eq:deriv_temp_proof1}
		\begin{aligned}
			\Bigg|\int_0^\tf\int_D&\big(\rho(T_D)\partial_t ZS 
			+ k(T_D)\bnabla Z\cdot\bnabla S\big)\dx\dt\Bigg|  \\[1ex]
			&\leq \rho_2\|\partial_t Z
			\|_{L^2((0,\tf); H^{-1}(D))}\|S\|_{L^2((0,\tf); H^1(D))}\\[1ex]
			&\hspace{10mm} + k_2   \|Z\|_{L^2((0,\tf); H^1(D))} 
			\|S\|_{L^2((0,\tf); H^1(D))}\\[1ex]
			&\leq C_1\|Z\|_{\Win((0,\tf), D)} \|S\|_{\HD((0,\tf),D)},
		\end{aligned}
	\end{equation}
	where \(C_1 = \max\{\rho_2,k_2\}\). 
	Employing the assumptions \eqref{eq:parabolicity_condition}, 
	the regularity results \eqref{eq:temp_regularity}, and the
	inequalities of H\"older as well as of Ladyzhenskaya, we get 
	\begin{equation}\label{eq:deriv_temp_proof2}
		\begin{aligned}
			\Bigg|\int_0^\tf\int_D&\big(\rho'(T_D)\partial_t T_DS 
			+ k'(T_D)\bnabla T_D\cdot\bnabla S\big)Z\dx\dt\Bigg| \\[1ex]
			&\leq \rho'_2 \|\partial_t T_D\|_{L^2((0,\tf)\times D)} 
			\|Z\|_{L^4((0,\tf)\times D)}\|S\|_{L^4((0,\tf)\times D)} \\[1ex]
			&\hspace{10mm} + k'_2 \|\bnabla T_D\|_{L^4((0,\tf)\times D)} 
			\|Z\|_{L^4((0,\tf)\times D)}\|\bnabla S\|_{L^2((0,\tf)\times D)} \\[1ex]
			&\leq C_L\rho'_2 \|\partial_t T_D\|_{L^2((0,\tf);L^2(D))}
			\|Z\|_{\Win((0,\tf), D)} \|S\|_{\HD((0,\tf),D)} \\[1ex]
			&\hspace{10mm} + C_L k'_2 \|\bnabla T_D\|_{L^4((0,\tf)\times D)} 
			\|Z\|_{\Win((0,\tf), D)}\|S\|_{\HD((0,\tf),D)} \\[1ex]
			&\leq C_2 \|Z\|_{\Win((0,\tf), D)}\|S\|_{\HD((0,\tf),D)}, 
		\end{aligned}
	\end{equation}
	where  
	\[
		C_2 = C_L\max\Big\{\rho'_2 
		\|\partial_t T_D\|_{L^2((0,\tf);L^2(D))}, 
		k'_2 \|T_D\|^{1/2}_{L^\infty((0,\tf);H^1(D))}
		\|T_D\|^{1/2}_{L^2((0,\tf);H^2(D))}\Big\}.
	\]
	Note that \(C_L\) depends only on \(D\) and \(\tf\). 
	By the Cauchy-Schwarz inequality and the trace 
	theorem, we conclude
	\begin{equation}\label{eq:deriv_temp_proof3}
		\begin{aligned}
		\Bigg|\int_0^\tf\int_{\GR} \beta ZS\ds\dt\Bigg| &\leq\beta
		\|Z\|_{L^2((0,\tf);L^2(\GR))}\|S\|_{L^2((0,\tf);L^2(\GR))}\\[1ex]
		&\leq \beta C_{\rm tr} \|Z\|_{\Win((0,\tf),D)} \|S\|_{\HD((0,\tf),D)}
		\end{aligned}
	\end{equation}
	with \(C_{\rm tr}>0\) denoting the constant in the 
	trace inequality. Finally, combining the estimates 
	\eqref{eq:deriv_temp_proof1}--\eqref{eq:deriv_temp_proof3}, 
	we conclude that the bilinear form $\mathcal{B}$
	from \eqref{eq:BLF} is bounded. 
	
	By the assumptions \eqref{eq:parabolicity_condition} and Poincar\'e's 
	inequality, we further conclude
	\begin{equation}\label{eq:deriv_temp_proof5}
		\begin{aligned}
			\int_Dk(T_D)|\bnabla Z(t)|^2\dx + \int_{\GR} \beta Z^2(t)\ds 
			\geq C_Pk_1\|Z(t)\|^2_{H^1_0(D)},
		\end{aligned}
	\end{equation}
	where constant \(C_P\) depends only on \(D\). In view of
	the assumptions \eqref{eq:parabolicity_condition}, H\"older’s, 
	Ladyzhenskaya’s and Young's inequalities, we derive
	\begin{equation}\label{eq:deriv_temp_proof6}
		\begin{aligned}
			\Bigg|\int_D\big( &\rho'(T_D)\partial_t T_DZ^2 + k'(T_D) Z \bnabla T_D
			\cdot\bnabla Z \big)(t)\dx\Bigg| \\[1ex]
			&\leq\rho'_2\|\partial_t T_D(t)\|_{L^2(D)} \|Z(t)\|^2_{L^4(D)} \\[1ex]
			&\hspace{10mm} + k'_2 \|Z(t)\|_{L^4(D)}  
			\|\bnabla T_D(t)\|_{L^4(D)} \|\bnabla Z(t)\|_{L^2(D)} \\[1ex]
			&\leq C_{LD}\rho'_2\|\partial_t T_D(t)\|_{L^2(D)} \|Z(t)\|_{L^2(D)} 
			\|\bnabla Z(t)\|_{L^2(D)}\\[1ex]
			&\hspace{10mm} + C_{LD}k'_2 \|Z(t)\|^{1/2}_{L^2(D)}  
			\|\bnabla T_D(t)\|_{L^4(D)} \|\bnabla Z(t)\|^{3/2}_{L^2(D)}\\[1ex]
			&\leq \epsilon\|\bnabla Z(t)\|^{2}_{L^2(D)} + C_{\epsilon} (C_{LD} \rho'_2)^2
			\|\partial_t T_D(t)\|^2_{L^2(D)} \|Z(t)\|^2_{L^2(D)} \\[1ex]
			&\hspace{10mm} + \epsilon \|\bnabla Z(t)\|^{2}_{L^2(D)} + 
			C_{\epsilon} (C_{LD} k'_2)^4 \|Z(t)\|^2_{L^2(D)}  \|\bnabla T_D(t)\|^4_{L^4(D)} \\[1ex]
			&\leq 2\epsilon\|\bnabla Z(t)\|^{2}_{L^2(D)} 
			+  C_3(t) \|Z(t)\|^2_{L^2(D)}.
		\end{aligned}
	\end{equation}
	Herein, \(C_3(t) := C_{\epsilon}(C_{LD} \rho'_2)^2\|\partial_t T_D(t)\|^2_{L^2(D)} 
	+ C_{\epsilon}(C_{LD} k'_2)^4\|\bnabla T_D(t)\|^4_{L^4(D)}\), where \(C_{LD}\) 
	depends only on \(D\) and \(C_\epsilon\) depends only on \(\epsilon\). By
	standard regularity results, we can further conclude that \(C_3(t)\in L^1(0,\tf)\). 
	Thus, by applying \eqref{eq:deriv_temp_proof5} and \eqref{eq:deriv_temp_proof6} 
	to \eqref{eq:deriv_temp_proof4} with \(0<\epsilon<C_Pk_1/2\), we obtain
	\begin{equation}\label{eq:deriv_temp_proof7}
	\begin{aligned}
		\mathcal{B}_t(t)(Z,Z) 
		&\geq (C_Pk_1-2\epsilon)\|Z(t)\|^2_{H^1_0(D)} - C_3(t) \|Z(t)\|^2_{L^2(D)}\\
		&\geq -C_3(t) \|Z(t)\|^2_{L^2(D)}.
	\end{aligned}
	\end{equation}
	
	Since \(\mathcal B\) is bounded and the associated spatial form
	$\mathcal{B}_t$ satisfies the inequality \eqref{eq:deriv_temp_proof7}, 
	the standard variational theory for linear (see, e.g., \cite[Sct.\ 7.1.2]{evans2022partial})
	parabolic equations implies for arbitrary \(f\in L^2\big((0,\tf);H^{-1}(D)\big)\) that
	there exists a unique \(Z\in \Win\big((0,\tf),D\big)\) such that
	\[
		\mathcal{B}\langle Z, S \rangle  = \langle f, S \rangle _{H^{-1}(D),H^1(D)} 
		\quad \forall S \in \HD\big((0,\tf), D\big).
	\]
	Consequently, \(\partial_T\mathcal F(0,T_D)\) is bijective. The
	corresponding energy estimate gives the boundedness of the inverse.
	Hence, \(\partial_T\mathcal F(0,T_D)\) is an isomorphism and the 
	Fr\'echet differentiability follows from the implicit function theorem, 
	see e.g.\ \cite[Chpt.\ I, Thm.\ 5.9]{LANG}.
	
	{\it Step 3.} In order to obtain an expression for the derivative \(\dot{T}_D(\btheta)\), 
	we calculate
	\[
		\partial_T\mathcal{A}(\bnull, T_D)
		\langle\dot{T}_D(\btheta), S\rangle
		= \partial_{\btheta}b(\bnull)\langle\btheta, S\rangle
		- \partial_{\btheta}\mathcal{A}(\bnull,T_D)
		\langle\btheta, S\rangle \quad \forall S \in \HD\big((0,\tf), D\big).
	\]
	For any \(\btheta \in  W^{1,\infty}(\Dbox; \mR^d)\) satisfying 
	\(\btheta = \bnull\) on \(\GD \cup \GN\), the corresponding 
	derivatives have the explicit expressions
	\begin{align*}
		\partial_T\mathcal{A}(\bnull, T_D)
		\langle\dot{T}_D(\btheta),S\rangle= &\int_0^\tf\int_D
		\big(\rho'(T_D)\dot{T}_D(\btheta) \partial_t T_D
		+ \rho(T_D)\partial_t\dot{T}_D(\btheta)\big)S\dx\dt \\[1ex]
		&+\int_0^\tf \int_D\big(k'(T_D)\dot{T}_D(\btheta) \bnabla T_D
		\cdot\bnabla S + k(T_D)\bnabla \dot{T}_D(\btheta)\cdot\bnabla S \big)\dx\dt \\[1ex]
		&+ \int_0^\tf\int_{\GF} \beta \dot{T}_D(\btheta) S\ds\dt,
	\end{align*}
	\begin{align*}
		\partial_{\btheta}\mathcal{A}(\bnull, T_D)
		\langle\btheta, S\rangle = &\int_0^\tf \int_D\div(\btheta)\rho(T_D)
		\partial_t T_D S\dx\dt \\[1ex]
		&+ \int_0^\tf\int_D k(T_D)\bA'(\btheta)\bnabla T_D\cdot\bnabla S\dx\dt \\[1ex]
		&+ \int_0^\tf\int_{\GF} \beta\div_{\btau}(\btheta) T_D S\ds\dt,
	\end{align*}
	and
	\[
		 \partial_{\btheta}b(\bnull)\langle\btheta, S\rangle
		 = \int_0^\tf\int_D \div(Q\btheta) S\dx\dt
		+ \int_0^\tf\int_{\GF} \beta\div_{\btau}(\Tex\btheta) S\ds\dt,
	\]
	where \(\bA'(\btheta)\) is defined in \eqref{eq:deriv_temp_not}.
	Finally we conclude the variational identity for \(\dot{T}_D(\btheta)
	\in \WO\big((0,\tf),D\big)\):
	\begin{equation}\label{eq:var_deriv_temp}
	\begin{aligned}
		&\int_0^\tf\int_D\rho(T_D)\partial_t\dot{T}_D(\btheta) S\dx\dt
		+\int_0^\tf\int_D k(T_D)\bnabla \dot{T}_D(\btheta)\cdot\bnabla S\dx\dt \\[1ex]
		&\qquad+\int_0^\tf\int_D \dot{T}_D(\btheta)
			\big(\rho'(T_D)\partial_t T_DS
			+ k'(T_D)\bnabla T_D\cdot\bnabla S\big)\dx\dt
		+\int_0^\tf\int_{\GF} \beta \dot{T}_D(\btheta)S\ds\dt\\[1ex]
		&\quad= \int_0^\tf\int_D\big(\div(Q\btheta)S 
			-\div(\btheta)\rho(T_D)\partial_t T_D S 
			- k(T_D)\bA'(\btheta)\bnabla T_D\cdot\bnabla S\big)\dx\dt \\[1ex]
		&\qquad+ \int_0^\tf\int_{\GF} \beta\big(\div_{\btau}(\Tex\btheta) - \div_{\btau}(\btheta) T_D\big)S\ds\dt 
			\quad\forall S\in \HD\big((0,\tf),D\big).
		\end{aligned}
	\end{equation}

	Finally, the claim follows by applying Green's formula 
	to \eqref{eq:var_deriv_temp} and taking into account 
	the boundary conditions from \eqref{eq:heat}.
\end{proof}

The next lemma characterizes the Lagrangian derivative of 
the displacement field.

\begin{lemma}\label{lem:deriv_disp}
	If the solution \(T_D\in W\big((0,tf), D\big)\) to \eqref{eq:heat} 
	is Lagrangian differentiable, then  the Lagrangian derivative 
	\(\dot \bu_{D}(\btheta) \in\HD\big((0,\tf),D\big)^d\) of the 
	solution \(\bu_D\in\HD\big((0,\tf), D\big)^d\) to 
	\eqref{eq:elasticity} satisfies
	\begin{equation}\label{eq:deriv_disp}
		\left\{\;\begin{aligned}
			-\div\big(\mC(T_D):\varepsilon(\dot\bu_D(\btheta))\big) 
			&=\widetilde{\boldsymbol{f}}_D(\btheta) &&\text{in~~} (0,\tf) \times D, \\[1ex]
			\big(\mC(T_D):\varepsilon(\dot\bu_D(\btheta))\big) \bn  &=  \bnull 
			&&\text{on~~} (0,\tf) \times \GN, \\[1ex]
			\big(\mC(T_D):\varepsilon(\dot\bu_D(\btheta))\big) \bn 
			&= \widetilde{\boldsymbol{g}}_D(\btheta) &&\text{on~~} (0,\tf) \times \GF,
		\end{aligned}\right.
	\end{equation}
	where \(T_D\in \Win\big((0,\tf), D\big)\) is the solution of 
	\eqref{eq:var_heat} and
	\begin{equation}\label{eq:deriv_disp_not}
		\begin{gathered}
			\hspace{-5mm}\widetilde{\boldsymbol{f}}_D(\btheta) 
			:= \div\!\Big(\!\div(\btheta)\sigma(T_D,\bu_D) 
			- \mC(T_D) :(\bnabla\btheta^\top\bnabla\bu_D) 
			- \bnabla\btheta\big(\mC(T_D) : \varepsilon(\bu_D)\big)\\
			\hspace{62mm} + \bf\otimes\btheta 
			+ \dot T_D(\btheta)\bB(T_D) 
			+ \dot T_D(\btheta)\sigma^\ast(T_D, \bu_D)\Big), \\[1ex]
			\hspace{-11mm}\widetilde{\boldsymbol{g}}_D(\btheta) 
			:= \Big(\!\div(\btheta)\sigma(T_D,\bu_D) 
			- \mC(T_D) :(\bnabla\btheta^\top\bnabla\bu_D) 
			- \bnabla\btheta\big(\mC(T_D) : \varepsilon(\bu_D)\big)\\
			\hspace{62mm} + \bf\otimes\btheta 
			+ \dot T_D(\btheta)\bB(T_D) 
			+ \dot T_D(\btheta)\sigma^\ast(T_D, \bu_D)\Big)\bn, \\[1ex]
			\sigma^\ast(T_D,\bu_D) 
			= \mC'(T_D):\varepsilon(\bu_D)+(T_D-\Tin)\bB'(T_D),  \\[1ex]
			\mC'(T_D):\varepsilon(\bu_D) = 2\mu'(T_D)\varepsilon(\bu_D) 
			+  \lambda'(T_D)\div(\bu_D)\bI, \\[1ex]
			\bB'(T_D) = -\alpha'(T_D)\big(3\lambda(T_D) + 2\mu(T_D)\big)\bI 
			-  \alpha(T_D)\big(3\lambda'(T_D) + 2\mu'(T_D)\big)\bI.
		\end{gathered}
	\end{equation}
	It admits the following variational identity for \(\dot\bu(\btheta)\in 
	\HD\big((0,\tf),D\big)^d\):
		\begin{equation}\label{eq:var_deriv_disp}
		\begin{aligned}
			\int_0^\tf\int_D&\mC(T_D):\varepsilon(\dot\bu_D) : 
			\varepsilon(\bv)\dx\dt \\[1ex]
			&= \int_0^\tf\int_D\Big(\mC(T_D) : 
			(\bnabla\btheta^\top\bnabla\bu_D) : \varepsilon(\bv) 
			+ \mC(T_D) : \varepsilon(\bu_D) : (\bnabla\btheta^\top\bnabla\bv) \\[1ex]
			&\hspace{25mm} - \div(\btheta) \sigma(T_D,\bu_D): 
			\varepsilon(\bv) - \dot T_D(\btheta)\bB(T_D) : \varepsilon(\bv) \\[1ex]
			&\hspace{25mm} - \dot T_D(\btheta) \sigma^\ast(T_D, \bu_D) : \varepsilon(\bv) 
			+ (T_D-\Tin)\big(\bB(T_D)\bnabla\btheta^\top\big):\bnabla\bv \\[1ex]
			&\hspace{25mm}+ \div(\bf\otimes\btheta)\cdot\bv\Big) \dx\dt 
			\quad \forall \bv\in\HD\big((0,\tf), D\big)^d.
		\end{aligned}
	\end{equation}
\end{lemma}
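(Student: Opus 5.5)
We mirror the three-step structure of the proof of Lemma~\ref{lem:deriv_temp}, which is simpler here because the elasticity problem is linear in \(\bu\). The temperature enters only as a known coefficient, and its Lagrangian differentiability is assumed.

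\emph{Transport.} We start from \eqref{eq:var_elasticity} posed on \(D_{\btheta}\) and pull it back to \(D\) via \(\text{Id}+\btheta\). Setting \(\widehat\bu_D(\btheta):=\bu_{D_\btheta}\circ(\text{Id}+\btheta)\) and transporting test functions \(\bv\circ(\text{Id}+\btheta)^{-1}\), the chain rule turns \(\bnabla\bv\) into \(\bnabla\bv(\bI+\bnabla\btheta)^{-1}\). This gives
\[
\int_0^\tf\!\int_D \Big(\mC(\widehat T_D):\varepsilon_\btheta(\widehat\bu_D)+(\widehat T_D-\Tin)\bB(\widehat T_D)\Big):\varepsilon_\btheta(\bv)\,J(\btheta)\dx\dt
=\int_0^\tf\!\int_D \widehat\bf\cdot\bv\,J(\btheta)\dx\dt+\int_0^\tf\!\int_{\GN}\bg\cdot\bv\ds\dt,
\]
where \(\varepsilon_\btheta(\bw):=\tfrac12\big(\bnabla\bw(\bI+\bnabla\btheta)^{-1}+(\bI+\bnabla\btheta)^{-\top}\bnabla\bw^\top\big)\). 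The \(\GN\) term is unchanged because \(\btheta=\bnull\) there.

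\emph{Differentiability.} We write this identity as \(\mathcal G(\btheta,\widehat T_D(\btheta),\widehat\bu_D(\btheta))=0\). The map \(\mathcal G\) is affine in \(\bu\), with \(\partial_\bu\mathcal G(\bnull,T_D,\bu_D)\langle\bw,\bv\rangle=\int_0^\tf\int_D\mC(T_D):\varepsilon(\bw):\varepsilon(\bv)\dx\dt\). By \eqref{eq:ellipticity_condition}, Korn's inequality (using \(|\GD|>0\)) and Lax--Milgram applied pointwise in time and then integrated, this operator is an isomorphism from \(\HD((0,\tf),D)^d\) onto its dual.

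Fréchet differentiability of \(\mathcal G\) in \(\btheta\) is standard, since \(J\) and \((\bI+\bnabla\btheta)^{-1}\) are smooth maps into \(L^\infty\). Differentiability in \(T\) needs the Nemytskii operators \(T\mapsto\mC(T):\varepsilon(\bu_D)\) and \(T\mapsto (T-\Tin)\bB(T)\) to be differentiable from \(W((0,\tf),D)\) into \(L^2\)-type spaces. This is the main obstacle. Since \(\varepsilon(\bu_D)\) is only \(L^2\), the product \(\mC'(T_D)\dot T_D\,\varepsilon(\bu_D)\) needs extra integrability. The first thing to try is to use \(d=2\), the embedding \(W\hookrightarrow L^4\) (Ladyzhenskaya, as in Lemma~\ref{lem:deriv_temp}), and \(C^1\) coefficients, possibly combined with an \(L^4\) or Meyers-type higher integrability of \(\varepsilon(\bu_D)\). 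With the composed map \(\btheta\mapsto(\btheta,\widehat T_D(\btheta))\) differentiable by hypothesis, the implicit function theorem (\cite[Chpt.\ I, Thm.\ 5.9]{LANG}) then gives the differentiability of \(\widehat\bu_D\) at \(\btheta=\bnull\).

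\emph{Identification.} Differentiating the transported identity at \(\btheta=\bnull\) in direction \(\btheta\) uses
\[
\frac{d}{d\btheta}J=\div\btheta,\qquad \frac{d}{d\btheta}\varepsilon_\btheta(\bw)=-\tfrac12\big(\bnabla\bw\bnabla\btheta+\bnabla\btheta^\top\bnabla\bw^\top\big),\qquad \frac{d}{d\btheta}(\widehat\bf J)=\div(\bf\otimes\btheta)\ \text{(componentwise)},
\]
together with the chain rule in \(T\), which produces \(\dot T_D(\btheta)\sigma^\ast(T_D,\bu_D)\) and \(\dot T_D(\btheta)\bB(T_D)\). Using the symmetry of \(\mC\), the derivative of \(\varepsilon_\btheta(\bv)\) contributes \(\mC:\varepsilon(\bu_D):(\bnabla\btheta^\top\bnabla\bv)\) and the thermal term \((T_D-\Tin)(\bB\bnabla\btheta^\top):\bnabla\bv\). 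The derivative of \(\varepsilon_\btheta(\bu_D)\) contributes \(\mC:(\bnabla\btheta^\top\bnabla\bu_D):\varepsilon(\bv)\). Collecting terms yields \eqref{eq:var_deriv_disp}.

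Finally, integrating by parts with Green's formula, using \eqref{eq:elasticity} and \(\btheta=\bnull\) on \(\GD\cup\GN\), gives the strong form \eqref{eq:deriv_disp} with \(\widetilde{\bf}_D\) and \(\widetilde{\bg}_D\) as in \eqref{eq:deriv_disp_not}. The boundary source appears only on \(\GF\), since on \(\GN\) the \(\btheta\)-dependent fluxes vanish, apart from the thermal-coefficient terms, which we check cancel against the transported Neumann data.
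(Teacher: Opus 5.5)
Your outline is the one the paper intends: its proof only says that the argument of Lemma~\ref{lem:deriv_temp} carries over, and it does not discuss the difficulty below. However, the step you yourself call the main obstacle is left open, and in the form you set it up it cannot be closed. The implicit function theorem needs $(\btheta,\bu)\mapsto\mathcal G(\btheta,\widehat T_D(\btheta),\bu)$ to be $C^1$ on a neighbourhood of $(\bnull,\bu_D)$, with values in the dual of $\HD((0,\tf),D)^d$. Since $\mathcal G$ is affine in $\bu$, this amounts to differentiability of $\btheta\mapsto\mC(\widehat T_D(\btheta))$ as a multiplier on $L^2$, i.e.\ essentially $\dot T_D(\btheta)\,\mC'(T_D)\in L^\infty$. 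That fails, because $W((0,\tf),D)\not\hookrightarrow L^\infty$ even for $d=2$. Higher integrability of $\varepsilon(\bu_D)$ cannot help, since the requirement concerns all $\bu$ near $\bu_D$.

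Use the linearity instead. Define $\dot\bu_D(\btheta)$ by \eqref{eq:var_deriv_disp} via Lax--Milgram, subtract the transported equations for $\widehat\bu_D(\btheta)$ and $\bu_D$, and prove $\widehat\bu_D(\btheta)-\bu_D-\dot\bu_D(\btheta)=o(\|\btheta\|_{W^{1,\infty}})$ directly. The critical terms $\big(\mC(\widehat T_D(\btheta))-\mC(T_D)\big):\varepsilon(\cdot)$ can be controlled by combining two bounds:
\begin{itemize}
\item Meyers/Gr\"oger-type bounds on $\varepsilon(\widehat\bu_D(\btheta))$ in some $L^p$, $p>2$. These are uniform in $\btheta$, because the transported coefficients are uniformly elliptic and bounded.
\item $\widehat T_D(\btheta)-T_D=O(\|\btheta\|)$ in $W((0,\tf),D)\hookrightarrow L^2((0,\tf);L^q(D))$, valid for all $q<\infty$ when $d=2$.
\end{itemize}
The $L^4$ alternative you mention is not available in general: near the points where $\GD$ meets $\GR$, $\bnabla\bu_D$ generically behaves like $r^{-1/2}$.

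Time integrability is a further obstruction. With $\bf,\bg$ only $L^2$ in time, $\varepsilon(\bu_D)$ is only $L^2$ in time. Meanwhile $\dot T_D(\btheta)$ is known to lie in $L^\infty((0,\tf);L^2(D))\cap L^2((0,\tf);H^1(D))$, but not in $L^\infty((0,\tf);L^q(D))$ for any $q>2$. So H\"older's inequality does not place $\dot T_D(\btheta)\,\mC'(T_D):\varepsilon(\bu_D)$ in $L^2((0,\tf);L^2(D))$. That is exactly what the right-hand side of \eqref{eq:var_deriv_disp} needs in order to lie in the dual of $\HD((0,\tf),D)^d$. An extra assumption is required, for instance loads bounded in time; together with $T_D\in L^\infty((0,\tf);H^1(D))$ from \eqref{eq:temp_regularity}, this gives $\varepsilon(\bu_D)\in L^\infty((0,\tf);L^p(D))$. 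Similarly, differentiating $\btheta\mapsto\widehat\bf$ requires spatial derivatives of $\bf$, as the term $\div(\bf\otimes\btheta)$ shows, so calling the $\btheta$-differentiability ``standard'' is too quick.

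Your last step fails as well. Green's formula applied to \eqref{eq:var_deriv_disp} produces a natural boundary condition on all of $\GR$, not only on $\GF$. Already the terms $-\dot T_D(\btheta)\big(\bB(T_D)+\sigma^\ast(T_D,\bu_D)\big):\varepsilon(\bv)$ give $\big(\mC(T_D):\varepsilon(\dot\bu_D(\btheta))\big)\bn=-\dot T_D(\btheta)\big(\bB(T_D)+\sigma^\ast(T_D,\bu_D)\big)\bn+\dots$ on $\GN$. There is nothing for this to cancel against:
\begin{itemize}
\item $\dot T_D(\btheta)\in\WO((0,\tf),D)$ is only required to vanish on $\GD$, not on $\GN$.
\item The datum $\bg$ on the fixed part $\GN$ enters the transported problem independently of $\btheta$ and of the temperature, so it contributes nothing to the derivative.
\item $\btheta=\bnull$ on $\GN$ only forces the tangential derivatives of $\btheta$ to vanish there, so the $\bnabla\btheta$-terms need not vanish on $\GN$ either.
\end{itemize}
Hence the homogeneous condition on $\GN$ in \eqref{eq:deriv_disp} does not follow from \eqref{eq:var_deriv_disp} and is in general incompatible with it. The same computation also gives the $\dot T_D$-part of the traction on $\GF$ with the sign opposite to the one in $\widetilde{\bg}_D(\btheta)$. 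Once the first gap is repaired, what the argument actually delivers is the variational identity \eqref{eq:var_deriv_disp}. You should say so, rather than assert a cancellation you have not checked.
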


\begin{proof}
	The proof follows the same arguments as in the proof of
	Lemma \ref{lem:deriv_temp} and is therefore omitted.
\end{proof}

\begin{remark}\label{rem:deriv_3d}
	The proof of Lemma~\ref{lem:deriv_temp} is carried out in the
	two-dimensional setting. In order to extend the same argument to
	three spatial dimensions, additional regularity of the temperature
	field is required. More precisely, the estimates used in the proof 
	can be adapted to the three-dimensional case when assuming
	\[
	T_D\in L^\infty\big((0,\tf);H^2(D)\big)
	\cap W^{1,\infty}\big((0,\tf);L^2(D)\big).
	\]
\end{remark}

\begin{remark}\label{rem:deriv_existence}
	The well-posedness of the linear parabolic problem
	\eqref{eq:deriv_temp} can be established by the same arguments
	and estimates as those used in the proof of Lemma~\ref{lem:deriv_temp}.
	It allows us to apply the standard well-posedness theory for linear 
	parabolic equations; see, e.g., \cite[Sct.~7.1.2]{evans2022partial}.
	Hence, the problem \eqref{eq:deriv_temp} admits a unique weak
	solution. The linearized thermoelasticity problem \eqref{eq:deriv_disp} 
	is a linear elliptic problem with bounded and coercive bilinear form. 
	Therefore, by the Lax-Milgram theorem, it admits a unique weak 
	solution under assumptions \eqref{eq:ellipticity_condition}.
\end{remark}

\subsection{Shape derivative}
The objective of this subsection is the computation 
of the shape derivatives for the functionals of interest.
Their differentiability is defined in accordance with 
the following definition, compare \cite{DEZ,henrot2018shape, 
sokolowski1992introduction}.

\begin{definition}[Fr\'echet differentiable shape functional]
	\label{def:shape_deriv} 
	A shape functional \(\F(D)\) is Fr\'echet differentiable at 
	a given domain \(D\) if there exists a continuous linear 
	function \(\F'(D):  W^{1,\infty}(\Dbox; \mR^d)\to \mR\) such that
	\[
		\F(D_{\btheta}) = \F(D) + \F'(D)\langle\btheta\rangle + o(\btheta)
	\]
	for all \(\btheta \in  W^{1,\infty}(\Dbox; \mR^d)\). The linear form 
	\(\F'(D)\langle\cdot\rangle\) is called the shape derivative of \(\F\!\) in \(D\).
\end{definition}

In the context of unconstrained shape optimization, the 
shape derivative is employed to identify a direction 
\(\btheta\) of deformation such that \(\F'(D)(\btheta)< 0\). 
This direction of deformation serves then as a descent 
direction in a suitable optimization algorithm, allowing 
for the minimization of the objective functional \(\F(D)\). 

Before proceeding to the shape derivatives of the functionals, 
we need to mention an important theorem (see  
\cite[Prop.\ 5.9.1]{henrot2018shape}).

\begin{theorem}[Hadamard’s structure theorem]\label{th:hadamard}
	Let $D\subset\Dbox$ be a $C^1$-smooth domain. We suppose 
	that $F : \Dbox \to \mR$ is a differentiable functional in the 
	sense of Definition~\ref{def:shape_deriv}. If $\btheta\cdot\bn = \bnull$ 
	on the boundary $\partial D$, then there holds $F'(D)\langle\btheta\rangle = 0$.
\end{theorem}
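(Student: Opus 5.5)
The plan is to reduce the claim to the case of a field tangent to the boundary that vanishes there to first order, and then realize that field as the velocity of a family of diffeomorphisms mapping $D$ onto itself. First I would show that $F'(D)\langle\btheta\rangle$ depends only on the trace of $\btheta$ on $\partial D$. Take $\btheta$ with $\btheta=\bnull$ on $\partial D$. The transformation $\text{Id}+s\btheta$ then fixes $\partial D$ pointwise. For $s$ small and $\btheta$ smooth, it maps $D$ onto itself, so $F(D_{s\btheta})=F(D)$. Differentiating at $s=0$ gives $F'(D)\langle\btheta\rangle=0$ on a dense class of such fields, and continuity of $F'(D)$ in $W^{1,\infty}$ extends this. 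A small care point is that $W^{1,\infty}$-closure is not density. I would handle it by a smoothing argument in which $\btheta_\varepsilon\to\btheta$ with uniformly bounded Lipschitz norm, and, if needed, by an extra step using the Fréchet expansion with the $o(\btheta)$ remainder.

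Next, let $\btheta$ be tangent, $\btheta\cdot\bn=0$ on $\partial D$. The standard device is to consider the flow $\Phi_s$ of the autonomous ODE $\dot x=\btheta(x)$. Because $\btheta$ is Lipschitz and tangent to the $C^1$ boundary, Nagumo's viability theorem applied forward and backward in time shows that $\Phi_s(\partial D)=\partial D$, hence $\Phi_s(D)=D$ and $F(\Phi_s(D))=F(D)$ for all small $s$. I would then compare $\Phi_s$ with $\text{Id}+s\btheta$ by writing $\Phi_s=\text{Id}+\btheta_s$ with $\btheta_s=s\btheta+o(s)$ in $W^{1,\infty}$. This expansion holds for $C^1$ fields $\btheta$. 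The Fréchet expansion from Definition~\ref{def:shape_deriv} then gives
\[
0=F(D_{\btheta_s})-F(D)=sF'(D)\langle\btheta\rangle+o(s),
\]
which yields $F'(D)\langle\btheta\rangle=0$.

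The main obstacle is the regularity mismatch. The flow expansion $\btheta_s=s\btheta+o(s)$ in $W^{1,\infty}$ needs $\btheta\in C^1$, and Nagumo invariance needs a genuinely $C^1$ boundary. To handle a general tangent $\btheta\in W^{1,\infty}$, I would approximate it by $C^1$ tangent fields: mollify, then project the boundary values onto the tangent space using a $C^1$ extension of $\bn$ near $\partial D$. The difference between $\btheta$ and the approximant then has a small normal component, and I would correct it with a field supported near the boundary. Continuity of $F'(D)$ passes to the limit only if the convergence is in $W^{1,\infty}$ norm, which mollification does not provide. As in the literature, I would therefore state the result for $\btheta\in C^1$ (or $C^{1,\infty}$), in line with \cite[Prop.\ 5.9.1]{henrot2018shape}, and invoke that reference for the full statement.
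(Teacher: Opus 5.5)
Your argument (the flow of $\btheta$, Nagumo's theorem to obtain $\Phi_s(D)=D$, and the $W^{1,\infty}$ expansion $\Phi_s-\text{Id}=s\btheta+o(s)$ for $C^1$ fields) is the standard proof of \cite[Prop.\ 5.9.1]{henrot2018shape}, which the paper cites instead of giving a proof of its own. Your restriction to $C^1$ (i.e.\ $C^{1,\infty}$) tangent fields matches the setting in which that proposition is stated.

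Your first step is not needed for this conclusion. It also needs no density argument, and the smoothing you sketch would not converge in $W^{1,\infty}$ anyway. For Lipschitz $\btheta$ vanishing on $\partial D$ and $s$ small enough that $\text{Id}+s\btheta$ is a bi-Lipschitz homeomorphism, this map fixes $\partial D$ pointwise, so no path $s'\mapsto x+s'\btheta(x)$ with $x\in D$ can reach $\partial D$. Hence $D_{s\btheta}=D$, and $F'(D)\langle\btheta\rangle=0$ follows directly from the Fr\'echet expansion.
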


The application of this theorem is explained in the following remark. 

\begin{remark}\label{rem:hadamard}
	In the case of a sufficiently regular domain \(D\), we can 
	conclude from this theorem that the value of the derivative 
	\(F'(D)\langle\btheta\rangle \) depends only on the normal 
	component of the vector field \(\btheta\) on the boundary 
	\(\partial D\), i.e.
	\[
		F'(D)\langle\btheta\rangle = \int_{\partial D} v_D(\btheta\cdot\bn)\ds,
	\]
	where \(v_D:\partial D\to\mR\) is a scalar field whose 
	expression depends on the solutions of the underlying
	boundary value problems and the functional form. Thus, 
	in the case of an unconstrained optimization problem, a 
	descent direction \(\btheta\) to the optimal shape is easily 
	obtained by imposing that \(\btheta = -v_D\bn\) on \(\partial D\). 
	Consequently, we have
	\[
		F'(D)\langle-v_D\bn\rangle  = -\int_{\partial D} v_D^2\ds< 0.
	\]
\end{remark}

We are now in a position to derive the shape derivative of 
a generic shape functional of domain integral type within the 
framework of thermoelasticity with temperature dependent 
material parameters. The result is presented in the following 
theorem.

\begin{theorem}\label{th:shape_deriv}
Given \(f_D\in L^1\big((0,\tf);L^1(D)\big)\), let \(F(D)\) 
be a functional defined by
\[
\F(D) :=  \int_0^\tf \int_D f_D\dx\dt \quad\text{with}\quad
f_D := f(T_D,\bnabla T_D,\bu_D,\bnabla\bu_D),
\]
where \(T_D \in \Win\big((0,\tf),D\big)\) and
\(\bu_D\in \HD\big((0,\tf),D\big)^d\) denote the solutions to 
\eqref{eq:heat} and \eqref{eq:elasticity} on the domain \(D\), 
respectively. If \(f(\cdot)\) is continuously differentiable and
its first order partial derivatives, evaluated at the state 
variables, belong to \(L^2\big((0,\tf);L^2(D)\big)\), and if the state 
variables are Lagrangian differentiable, then \(F(D)\) is 
Fr\'echet differentiable and its shape derivative is given by
\begin{equation}\label{eq:shape_deriv}
	\begin{aligned}
			\F'(D)\langle\btheta\rangle &=\int_0^\tf\int_{\GF} \Big(f_D 
			- \sigma(T_D,\bu_D) : \varepsilon(\bw_D) + \bf\cdot\bw_D\\[1ex]
			 &\hspace{29mm}- \rho(T_D)\partial_t T_D R_D 
			 - k(T_D)\bnabla T_D\cdot\bnabla R_D + QR_D  \\[1ex]
			&\hspace{29mm}+\beta(\mathcal{H} - \beta/k(T_D))(\Tex - T_D)R_D
			\Big)\Big(\btheta\cdot\bn\Big)\ds\dt,
		\end{aligned}
\end{equation}
where the function \(\bw_D\in \HD((0,tf),D)^d\) satisfies the adjoint system
\begin{equation}\label{eq:adjoint_disp}
	\left\{\;
	\begin{aligned}
		-\div\big(\mC(T_D):\varepsilon(\bw_D)\big) 
		&= -\div\big(\partial_{\bnabla\bu} f_D\big)+\partial_{\bu} f_D
		&&\text{in}\quad (0,\tf)\times D, \\[1ex]
		\big(\mC(T_D):\varepsilon(\bw_D)\big)\bn &= \partial{\bnabla\bu} f_D 
		\bn &&\text{on}\quad (0,\tf)\times \GR,
	\end{aligned}
	\right.
\end{equation}
while \(R_D\in \Wf\big((0,\tf),D\big)\) satisfies the backward-in-time adjoint system
\begin{equation}\label{eq:adjoint_temp}
	\left\{\;
	\begin{aligned}
		\rho(T_D)\partial_t R_D+\div\big(k(T_D) \bnabla R_D\big) 
		- \rho'(T_D)\partial_t T_DR_D&\\[1ex]
		- k'(T_D)\bnabla T_D\cdot\bnabla R_D 
		&= \widetilde{Q}&&\text{in}\quad (0,\tf)\times D, \\[1ex]
		\big(k(T_D) \bnabla R_{D}\big)\cdot \bn  
		&=  \partial_{\bnabla T} f_D\cdot\bn
		&&\text{on}\quad (0,\tf)\times\GN, \\[1ex]
		\big(k(T_D) \bnabla R_{D}\big)\cdot \bn + \beta R_D 
		&= \partial_{\bnabla T} f_D\cdot\bn
		&&\text{on}\quad (0,\tf)\times\GF,
	\end{aligned}
	\right.
\end{equation}
where 
\[
	\widetilde{Q} := \div\big(\partial_{\bnabla T} f_D\big) 
	-\partial_T f_D
	+ \sigma^\ast(T_D, \bu_D):\varepsilon(\bw_D) 
	+ \bB(T_D):\varepsilon(\bw_D)
\] 
and \( \sigma^\ast(T_D, \bu_D)\) is defined in \eqref{eq:deriv_disp_not}. 
\end{theorem}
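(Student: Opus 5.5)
We transport $\F(D_{\btheta})$ back to the fixed domain $D$ and differentiate with the chain rule, using Lemmas~\ref{lem:deriv_temp} and~\ref{lem:deriv_disp} for the state derivatives. The resulting volume expression still contains $\dot T_D(\btheta)$ and $\dot\bu_D(\btheta)$. We eliminate these with the adjoint states $\bw_D$ and $R_D$, and finally reduce everything to a boundary integral over $\GF$ in the spirit of Theorem~\ref{th:hadamard}.

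\emph{Step 1 (transported functional).} By the change of variables $\bx\mapsto(\text{Id}+\btheta)(\bx)$ we have
\[
\F(D_{\btheta})=\int_0^\tf\int_D f\big(\widehat T_D,(\bI+\bnabla\btheta)^{-\top}\bnabla\widehat T_D,\widehat\bu_D,\bnabla\widehat\bu_D(\bI+\bnabla\btheta)^{-1}\big)J(\btheta)\dx\dt .
\]
Since $f$ is $C^1$, its partial derivatives lie in $L^2((0,\tf);L^2(D))$, and the transported states are Fréchet differentiable by assumption, the map $\btheta\mapsto\F(D_{\btheta})$ is Fréchet differentiable at $\bnull$. Its derivative $\F'(D)\langle\btheta\rangle$ is
\[
\int_0^\tf\!\!\int_D\Big(f_D\div\btheta+\partial_T f_D\,\dot T_D+\partial_{\bnabla T}f_D\cdot(\bnabla\dot T_D-\bnabla\btheta^\top\bnabla T_D)+\partial_{\bu}f_D\cdot\dot\bu_D+\partial_{\bnabla\bu}f_D:(\bnabla\dot\bu_D-\bnabla\bu_D\bnabla\btheta)\Big)\dx\dt .
\]
The estimates needed to control the remainder are the Hölder and Ladyzhenskaya bounds already used in Lemma~\ref{lem:deriv_temp}.

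\emph{Step 2 (adjoint elimination).} We test the variational form of \eqref{eq:adjoint_disp} with $\dot\bu_D$. This replaces $\int\partial_{\bu}f_D\cdot\dot\bu_D+\partial_{\bnabla\bu}f_D:\bnabla\dot\bu_D$ by $\int\mC(T_D):\varepsilon(\bw_D):\varepsilon(\dot\bu_D)$. By symmetry of $\mC$, this equals the right-hand side of \eqref{eq:var_deriv_disp} with $\bv=\bw_D$. That right-hand side contains the terms $-\dot T_D(\bB(T_D)+\sigma^\ast(T_D,\bu_D)):\varepsilon(\bw_D)$, which are exactly the coupling source terms in $\widetilde Q$.

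Next we test the weak form of \eqref{eq:adjoint_temp} (backward in time, $R_D(\tf)=0$) with $\dot T_D\in\WO$. We integrate by parts in time; no endpoint terms appear because $\dot T_D(0)=0$ and $R_D(\tf)=0$. The terms $\rho'(T_D)\partial_tT_D$ and $k'(T_D)\bnabla T_D$ are precisely the transposes of the linearized operator in \eqref{eq:var_deriv_temp}. This lets us replace all terms in $\dot T_D$ by the right-hand side of \eqref{eq:var_deriv_temp} with $S=R_D$. After these two substitutions, $\F'(D)\langle\btheta\rangle$ is a volume integral that is linear in $\btheta$ and $\bnabla\btheta$ and involves only $T_D,\bu_D,\bw_D,R_D$.

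\emph{Step 3 (boundary form).} We assume enough regularity of the states and adjoints for the pointwise equations \eqref{eq:heat}, \eqref{eq:elasticity}, \eqref{eq:adjoint_disp} and \eqref{eq:adjoint_temp} to hold. We then integrate by parts the terms containing $\bnabla\btheta$, using the identities $\div(f_D\btheta)=f_D\div\btheta+\bnabla f_D\cdot\btheta$ and, for instance, $\div\big((k\bnabla T\cdot\bnabla R)\btheta-k(\bnabla T\cdot\btheta)\bnabla R-k(\bnabla R\cdot\btheta)\bnabla T\big)$. All interior terms then cancel by the state and adjoint equations, and only boundary terms on $\GF$ remain, since $\btheta=\bnull$ on $\GD\cup\GN$.

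By Theorem~\ref{th:hadamard} only $\btheta\cdot\bn$ matters, so we may take $\btheta=(\btheta\cdot\bn)\bn$ on $\GF$. On $\GF$ the traction-free condition $\sigma\bn=\bnull$ and the adjoint Neumann condition remove the normal-derivative products. This leaves $f_D-\sigma:\varepsilon(\bw_D)+\bf\cdot\bw_D$ from the elastic part and $-\rho\partial_tT_DR_D-k\bnabla T_D\cdot\bnabla R_D+QR_D$ from the thermal part.

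\emph{Main obstacle.} We expect the hardest part to be the Robin boundary terms on $\GF$: $\int_{\GF}\beta\big(\div_{\btau}(\Tex\btheta)-\div_{\btau}(\btheta)T_D\big)R_D$ from $\partial_{\btheta}b$ and $\partial_{\btheta}\mathcal A$, together with the normal-derivative contributions. We would first apply the tangential Green formula $\int_{\GF}\div_{\btau}(\bv)\,\phi=\int_{\GF}\mathcal H(\bv\cdot\bn)\phi-\int_{\GF}\bv\cdot\bnabla_{\btau}\phi$. We then use the Robin condition $k\partial_{\bn}T_D=\beta(\Tex-T_D)$ and the adjoint Robin condition to express $\partial_{\bn}R_D$. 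This should combine into the term $\beta(\mathcal H-\beta/k(T_D))(\Tex-T_D)R_D$.

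The delicate points are the boundary terms on $\partial\GF$, which vanish because $\btheta=\bnull$ on the neighbouring fixed parts, and the role of $\partial_{\bnabla T}f_D\cdot\bn$. We would check the latter carefully and, if necessary, treat it as cancelling against the corresponding term from Step~1.
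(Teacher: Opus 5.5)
You follow the same route as the paper: transport to $D$, chain rule, elimination of $\dot\bu_D(\btheta)$ and $\dot T_D(\btheta)$ via $\bw_D$ and $R_D$, integration by parts, and the Hadamard reduction. However, two identities you take for granted are false, and the paper's proof passes over them in the same way.

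First, in Step~2 the term $-\rho'(T_D)\partial_tT_D R_D$ in \eqref{eq:adjoint_temp} is \emph{not} the transpose of the term $\rho'(T_D)\partial_tT_D\,\dot T_D(\btheta)$ in \eqref{eq:var_deriv_temp}. When you integrate by parts in time, $\partial_t$ also falls on the coefficient $\rho(T_D)$:
\[
\int_0^{\tf}\!\!\int_D\rho(T_D)\,\partial_t\dot T_D(\btheta)\,R_D\dx\dt
=-\int_0^{\tf}\!\!\int_D\dot T_D(\btheta)\big(\rho(T_D)\partial_tR_D+\rho'(T_D)\partial_tT_D\,R_D\big)\dx\dt .
\]
So the linearized time part is $\partial_t\big(\rho(T_D)\dot T_D(\btheta)\big)$, and its transpose is $-\rho(T_D)\partial_tR_D$ with no $\rho'$ term. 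With $R_D$ from \eqref{eq:adjoint_temp}, pairing the two variational identities leaves the residual $\int_0^{\tf}\int_D\rho'(T_D)\partial_tT_D\,\dot T_D(\btheta)R_D\dx\dt$, so $\dot T_D(\btheta)$ is not eliminated. Your Step~3 claim that all interior terms cancel fails for the same reason. For $\btheta$ compactly supported in $D$ you have $\dot T_D(\btheta)=\btheta\cdot\bnabla T_D$, and the volume expression equals $-\int_0^{\tf}\int_D\rho'(T_D)\partial_tT_D(\btheta\cdot\bnabla T_D)R_D\dx\dt$. This is in general nonzero, which is incompatible with Theorem~\ref{th:hadamard}. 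The elimination does work with the adjoint $\rho(T_D)\partial_tR_D+\div\big(k(T_D)\bnabla R_D\big)-k'(T_D)\bnabla T_D\cdot\bnabla R_D=\widetilde Q$.

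Second, the Robin terms you single out as the main obstacle do not combine into $\beta(\mathcal H-\beta/k(T_D))(\Tex-T_D)R_D$. Apply your own divergence identity to $-k(T_D)\bA'(\btheta)\bnabla T_D\cdot\bnabla R_D$, with $\btheta=(\btheta\cdot\bn)\bn$ on $\GF$ and $k=k(T_D)$. You get the boundary density $\big(-k\bnabla T_D\cdot\bnabla R_D+2k\,\partial_{\bn}T_D\,\partial_{\bn}R_D\big)(\btheta\cdot\bn)$. Both $\bnabla\btheta$ and $\bnabla\btheta^\top$ in $\bA'(\btheta)$ produce a normal product, just as the pair $(\mC:\varepsilon(\bw_D))\bn\cdot(\bnabla\bu_D\bn)$, $(\mC:\varepsilon(\bu_D))\bn\cdot(\bnabla\bw_D\bn)$ does on the elastic side; \eqref{eq:shape_deriv_proof5} keeps only one copy.

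Now insert $k\partial_{\bn}T_D=\beta(\Tex-T_D)$ and $k\partial_{\bn}R_D=\partial_{\bnabla T}f_D\cdot\bn-\beta R_D$, and add $-(\partial_{\bnabla T}f_D\cdot\bn)\,\partial_{\bn}T_D$ from your Step~1. The result is $\beta(\mathcal H-2\beta/k)(\Tex-T_D)R_D+\beta k^{-1}(\Tex-T_D)\,\partial_{\bnabla T}f_D\cdot\bn$. In particular, the $\partial_{\bnabla T}f_D$ term does not cancel.

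A stationary one-dimensional check confirms the factor $2$. Take $-u''=0$ on $(0,L)$, $u(0)=0$, $u'(L)+\beta u(L)=\beta g$, $\Phi(L)=\int_0^Lu\,\mathrm{d}x$, with adjoint $-R''=1$, $R(0)=0$, $R'(L)+\beta R(L)=0$. Then $\Phi'(L)=\beta g(2L+\beta L^2)/\big(2(1+\beta L)^2\big)$, which the boundary formula reproduces with $-2\beta$ but not with $-\beta$.

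So your sketch cannot be completed to \eqref{eq:shape_deriv} as stated. Carrying out these two computations gives a corrected adjoint and a corrected boundary density instead.
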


\begin{proof}
	We present the computation of \(\F'(D)\) in three steps.
	
	{\it Step 1.} For sufficiently small 
	\(\btheta \in W^{1,\infty}(\Dbox; \mR^d)\), a change of variables 
	in the shape functional \(\F(D_{\btheta})\) yields
	\begin{align*}
			\F(D_{\btheta})
			&= \int_0^\tf\int_{D_{\btheta}}f_{D_{\btheta}}\dx\dt \\[1ex]
			&=\int_0^\tf\int_D f\big(\widehat T_D(\btheta), (\bI + \bnabla\btheta)^{-\top}
			\bnabla\widehat T_D(\btheta), \widehat\bu_D(\btheta),
			(\bI +\bnabla\btheta)^{-\top}\bnabla\widehat\bu_D(\btheta)\big)
			J(\btheta)\dx\dt.
		\end{align*}
	By taking the derivative using Definitions \ref{def:lagrange_deriv} 
	and \ref{def:shape_deriv} in the above formula, we obtain 
	\begin{equation}\label{eq:shape_deriv_proof1}
			\begin{aligned}
					\F'(D)\langle\btheta\rangle
					&=\int_0^\tf\int_D \Big(\div(\btheta)f_D 
					+ \partial_T f_D\dot T_D(\btheta) 
					+ \partial_{\bnabla T} f_D\cdot\big(\bnabla\dot T_D(\btheta) 
					- \bnabla\btheta^\top\bnabla T_D\big)\\
					&\hspace{24mm} + \partial_{\bu} f_D \cdot\dot \bu_D(\btheta) 
					+ \partial_{\bnabla\bu} f_D : \big(\bnabla\dot \bu_D(\btheta) 
					- \bnabla\btheta^\top\bnabla \bu_D\big)\Big)\dx\dt.
				\end{aligned}
		\end{equation}
		
	{\it Step 2.} In order to remove the dependence on \(\dot\bu_D(\btheta)\),
	we formulate the variational identity for the adjoint 
	state \(\bw_D \in \HD\big((0,\tf), D\big)^d\) in accordance with
	\begin{equation}\label{eq:var_adj_disp}
		\begin{aligned}
			\int_0^\tf\int_D \mC(T_D):\varepsilon(\bw_D):\varepsilon(\bv)\dx\dt
			= \int_0^\tf\int_D\Big(\partial_{\bu} f_D\cdot\bv + 
			\partial_{\bnabla\bu} f_D:\bnabla\bv\Big)\dx\dt& \\[1ex]
			\forall\bv\in\HD\big((0,\tf), D\big)^d.&
		\end{aligned}
	\end{equation}
	By using Green's formula, it is straightforward to verify that the 
	variational identity amounts to the system \eqref{eq:adjoint_disp}.
	Taking \(\dot\bu_D(\btheta)\) as test function in 
	\eqref{eq:var_adj_disp} and \(\bw_D\) as test function in 
	\eqref{eq:var_deriv_disp}, we conclude 
	\begin{align*}
		&\int_0^\tf\int_D\Big(\partial_{\bu} f_D\cdot\dot\bu_D(\btheta) 
		+ \partial_{\bnabla\bu} f_D:\bnabla\dot\bu_D(\btheta)\Big)\dx\dt\\[1ex]
		&\hspace{15mm}= \int_0^\tf\int_D \Big(\mC(T_D):(\bnabla\btheta^\top\bnabla\bu_D) 
		: \varepsilon(\bw_D) + \mC(T_D) : \varepsilon(\bu_D) 
		: (\bnabla\btheta^\top\bnabla\bw_D) \\[1ex]
		&\hspace{40mm} - \div(\btheta) \sigma(T_D,\bu_D):\varepsilon(\bw_D) 
		- \dot T_D(\btheta)\bB(T_D):\varepsilon(\bw_D) \\[1ex]
		&\hspace{40mm}- \dot T_D(\btheta) \sigma^\ast(T_D, \bu_D) : \varepsilon(\bw_D) 
		+ (T_D-\Tin)\big(\bB(T_D)\bnabla\btheta^\top\big):\bnabla\bw_D\\[1ex] 
		&\hspace{40mm}+ \div(\bf\otimes\btheta)\cdot\bw_D\Big)\dx\dt.
	\end{align*}
	With this identity at hand, we can transform 
	\eqref{eq:shape_deriv_proof1} into 
	\begin{equation}\label{eq:shape_deriv_proof2}
		\begin{aligned}
				\F'(D)\langle\btheta\rangle =\int_0^\tf&\int_D \Big(\div(\btheta)f_D 
				+ \partial_T f_D\dot T_D(\btheta) 
				+ \partial_{\bnabla T} f_D\cdot\big(\bnabla\dot T_D(\btheta) 
				- \bnabla\btheta^\top\bnabla T_D\big)\\[1ex]
				&\quad\qquad- \partial_{\bnabla\bu} f_D : 
				(\bnabla\btheta^\top\bnabla \bu_D) + \mC(T_D) : 
				(\bnabla\btheta^\top\bnabla\bu_D) : \varepsilon(\bw_D) \\[1ex] 
				&\quad\qquad+\mC(T_D):\varepsilon(\bu_D) : 
				(\bnabla\btheta^\top\bnabla\bw_D) - \div(\btheta) 
				\sigma(T_D,\bu_D):\varepsilon(\bw_D) \\[1ex]
				&\quad\qquad - \dot T_D(\btheta)\bB(T_D) : \varepsilon(\bw_D) 
				- \dot T_D(\btheta) \sigma^\ast(T_D, \bu_D):\varepsilon(\bw_D) \\[1ex]
				&\quad\qquad + (T_D-\Tin)\big(\bB(T_D)\bnabla\btheta^\top\big):\bnabla\bw_D
				+\div(\bf\otimes\btheta)\cdot\bw_D \Big)\dx\dt.
			\end{aligned}
	\end{equation}
	
	In order to eliminate \(\dot T_D(\btheta)\) in \eqref{eq:shape_deriv_proof2}, 
	we introduce a variational identity for the adjoint state \(R_D\in \Wf\big((0,\tf),D\big)\) 
	which is given by the variational formulation
	\begin{equation}\label{eq:var_adj_temp}
		\begin{aligned}
				&\int_0^\tf\int_D\rho(T_D)\partial_t R_D S\dx\dt
				-\int_0^\tf\int_D k(T_D)\bnabla R_D\cdot\bnabla S\dx\dt \\[1ex]
				&\quad-\int_0^\tf\int_D \Big(\rho'(T_D)\partial_t T_D R_D
				+ k'(T_D)\bnabla T_D\cdot\bnabla R_D\Big)S\dx\dt 
				-\int_0^\tf\int_{\GF} \beta R_DS\ds\dt\\[1ex]
				&\qquad= -\int_0^\tf\int_D \Big(\partial_T f_D S
				+ \partial_{\bnabla T} f_D\cdot\bnabla S
				- S\bB(T_D):\varepsilon(\bw_D)  \\[1ex] 
				&\hspace{47mm}-S \sigma^\ast(T_D, \bu_D) : \varepsilon(\bw_D) \Big) \dx\dt
				\quad\forall S \in \HD\big((0,\tf),D\big).
			\end{aligned}
	\end{equation}
	By means of Green's formula, it follows that this 
	variational identity implies the system \eqref{eq:adjoint_temp}.
	By taking \(\dot T_D(\btheta)\) as test function in 
	\eqref{eq:var_adj_temp} and \(R_D\) as test function 
	in \eqref{eq:var_deriv_temp}, applying integration by parts 
	and using the boundary conditions from \eqref{eq:heat} and 
	\eqref{eq:deriv_temp}, we arrive at
	\begin{align*}
		&\int_0^\tf\int_D \Big(\partial_T f_D\dot T_D(\btheta) 
		+ \partial_{\bnabla T} f_D\cdot\bnabla \dot T_D(\btheta) 
		-\dot T_D(\btheta)\bB(T_D):\varepsilon(\bw_D) \\
		&\hspace{72mm} - \dot T_D(\btheta) \sigma^\ast(T_D, \bu_D):\varepsilon(\bw_D) \bigg)\dx\dt  \\
		&\hspace{10mm} = \int_0^\tf\int_D\Big( \div(Q\btheta)R_D - \div(\btheta)\rho(T_D)\partial_t T_D R_D 
		- k(T_D)\bA'(\btheta)\bnabla T_D\cdot\bnabla R_D\Big)\dx\dt \\
		&\hspace{25mm}+ \int_0^\tf\int_{\GF} \beta\Big(\div_{\btau}(\Tex\btheta) - \div_{\btau}(\btheta) T_D\Big)R_D\ds\dt. 
	\end{align*}
	Thus, in view of equation \eqref{eq:shape_deriv_proof2}, 
	we derive the expression
	\begin{equation}\label{eq:shape_deriv_proof3}
		\begin{aligned}
				\F'(D)\langle\btheta\rangle &=\int_0^\tf\int_D \Big(\div(\btheta)f_D 
				- \partial_{\bnabla T} f_D\cdot
				(\bnabla\btheta^\top\bnabla T_D)- \partial_{\bnabla\bu}f_D : 
				(\bnabla\btheta^\top\bnabla \bu_D)\\[1ex] 
				&\hspace{25mm}+\mC(T_D):(\bnabla\btheta^\top\bnabla\bu_D) : 
				\varepsilon(\bw_D) + \mC(T_D):\varepsilon(\bu_D) : 
				(\bnabla\btheta^\top\bnabla\bw_D) \\[1ex]
				&\hspace{25mm}-\div(\btheta)  \sigma(T_D,\bu_D):\varepsilon(\bw_D) + (T_D-\Tin)\big(\bB(T_D)\bnabla\btheta^\top\big):\bnabla\bw_D\\[1ex]
				&\hspace{25mm} - \div(\btheta)\rho(T_D)\partial_t T_D R_D 
				- k(T_D)\bA'(\btheta)\bnabla T_D\cdot\bnabla R_D\\[1ex]
				&\hspace{25mm}+ \div(\bf\otimes\btheta)\cdot\bw_D+\div(Q\btheta)R_D \Big)\dx\dt \\[1ex]
				&\hspace{14mm} + \int_0^\tf\int_{\GF} \beta\Big(\div_{\btau}(\Tex\btheta) - \div_{\btau}(\btheta) T_D\Big)R_D\ds\dt. 
		\end{aligned}
	\end{equation}
	
	{\it Step 3.} Now we want to express the derivative in surface form. 
	Thanks to Theorem \ref{th:hadamard} and Remark \ref{rem:hadamard}, 
	we can consider that  \(\btheta = (\btheta\cdot\bn)\bn\) on \(\GF\). Therefore, 
	there holds \(\div_{\btau}(\btheta) = \mathcal{H}(\btheta\cdot\bn)\)
	and \(\div_{\btau}(\Tex\btheta)=  \mathcal{H}\Tex(\btheta\cdot\bn)\). 
	By applying Green's formula to the identity \eqref{eq:shape_deriv_proof3},
	we thus derive the boundary integral form of \(\F'(D)\) that is given by
	\begin{equation}\label{eq:shape_deriv_proof5}
		\begin{aligned}
			\F'(D)\langle\btheta\rangle &=\int_0^\tf\int_{\GF} \Big(f_D 
			- \partial_{\bnabla T} f_D\bn\cdot(\bnabla T_D\cdot\bn)
			-\partial_{\bnabla\bu} f_D\bn\cdot(\bnabla \bu_D\bn) \\[1ex]
			&\hspace{29mm}+\big(\mC(T_D):\varepsilon(\bw_D)\big)\bn \cdot (\bnabla\bu_D\bn)\\[1ex]
			&\hspace{29mm}+\big(\mC(T_D):\varepsilon(\bu_D)\big)\bn \cdot (\bnabla\bw_D\bn)  \\[1ex] 
			&\hspace{29mm}-\sigma(T_D,\bu_D):\varepsilon(\bw_D)\\[1ex]
			&\hspace{29mm}+ (T_D-\Tin)\bB(T_D)\bn\cdot(\bnabla\bw_D\bn)\\[1ex] 
			&\hspace{29mm} + k(T_D)(\bnabla T_D\cdot\bn)(\bnabla R_D\cdot\bn)\\[1ex]  
			&\hspace{29mm} - \rho(T_D)\partial_t T_D R_D - k(T_D)\bnabla T_D\cdot\bnabla R_D  \\[1ex] 
			&\hspace{29mm}+ \bf\cdot\bw_D + QR_D + \beta\mathcal{H}(\Tex - T_D)R_D\Big)\Big(\btheta\cdot\bn\Big)\ds\dt.
		\end{aligned}
	\end{equation}
	Inserting the boundary conditions in \eqref{eq:heat}, \eqref{eq:elasticity}, 
	\eqref{eq:adjoint_disp}, and \eqref{eq:adjoint_temp}, we obtain the 
	desired expression of \(\F'(D)\) from \eqref{eq:shape_deriv_proof5},
	which completes the proof.
\end{proof}

\begin{remark}
	The assumptions of Theorem~\ref{th:shape_deriv} require that the
	growth of the integrand \(f\) is compatible with the regularity of the
	state variables. In particular, the objective functional has to be
	well defined and the first order partial derivatives of \(f\), evaluated 
	at \((T_D,\nabla T_D,\bu_D,\nabla\bu_D)\), have to possess the
	integrability required by the adjoint variational problems.
	
	Furthermore, the differentiability result of Theorem~\ref{th:shape_deriv}
	relies on the existence of the Lagrangian derivatives of the state
	variables. The differentiability of the temperature and displacement
	fields is established in Lemmas~\ref{lem:deriv_temp} and
	\ref{lem:deriv_disp}; see also Remark~\ref{rem:deriv_3d} for the
	corresponding assumptions and additional regularity requirements.
	
	Quadratic functionals are among the most commonly used objective
	functionals in shape optimization and include, in particular, the
	compliance and the \(L^2\)-norm of the von Mises stress considered 
	in Section~\ref{sec:numerics}. More general nonlinear integrands 
	can also be treated, provided that the above regularity conditions 
	are fulfilled.
\end{remark}

\section{Numerical realization}\label{sec:numerics}
This part of the article is devoted to the numerical methods and 
algorithms we employ for solving the corresponding shape optimization 
problems. We adopt the level-set method to handle the shape optimization 
process. To demonstrate the efficacy of the proposed approach, we perform 
numerical simulations for the optimization of a bridge-type structure 
subject to elevated external temperatures. We consider two distinct 
optimization scenarios to evaluate the robustness of the algorithm: 
first, the minimization of the structural compliance under a prescribed 
volume constraint, and second, the minimization of the total volume 
subject to a threshold on the \(L_2\)-norm of the von Mises stress. 
Both cases are simulated using the temperature dependent properties 
of the GH4099 superalloy. The underlying boundary value problems are 
solved with the finite element solver FreeFem++, see \cite{hecht2012new}. 

\subsection{Level-set method}
To ensure an accurate geometric representation during the optimization 
process, we use the level-set method. Originally introduced for 
front-tracking problems in \cite{osher1988fronts}, its application 
to shape and topology optimization has been further developed in 
\cite{allaire2004structural}.

In the level-set method, the domain \(D\subset\Dbox\) is given by the subset of 
negative function values of a \emph{level-set function\/} \(\phi:\Dbox\to\mR\),
such that
\begin{equation*}
	\begin{cases}
		\phi(\bx)<0 \text{~~if~~} \bx \in D, \\
		\phi(\bx) = 0 \text{~~if~~} \bx \in \partial D, \\
		\phi(\bx) > 0 \text{~~if~~} \bx \in D_\text{box}\setminus\overline{D}.
	\end{cases}
\end{equation*}
The discretized evolution of the domain \(D\) in \(D_\text{box}\) 
is governed by the Hamilton–Jacobi equation, which is expressed by
\begin{equation*}
	\left\{\,
	\begin{aligned}
		&\phi_{n+1} = \phi_n - \iota_n\btheta_n\cdot\nabla\phi_n
		\text{~~in~~} \Dbox, \\[1ex]
		&\phi_0 = \phi_\text{{\rm in}}\text{~~in~~} \Dbox,
	\end{aligned}
	\right.
\end{equation*}
where \(\iota_n=\text{const}>0\) is the discretization step size, 
\(\phi_\text{{\rm in}}\) is a chosen initial level-set 
function, and \(\btheta_n\) is the velocity field.

The velocity field is determined through a constrained optimization 
framework, using the shape derivatives of both the objective 
functional and the constraints. Rather than employing a standard 
projected gradient scheme, we utilize the null-space gradient flow 
algorithm proposed by Feppon et al.~\cite{feppon2020null}, which 
has gained popularity in the field of numerical shape optimization. 
This algorithm starts from a point that does not satisfy the constraint, 
and seeks to orientate the direction of the deformation at each step 
in such a way that the constraint is fulfilled first, while the objective
is reduced if possible. 

We use for our numerical computations the Dapogny-Feppon 
implementation of the null-space optimization algorithm for a 
level-set based mesh evolution method. Note that this method 
works without reinitialization of the level-set function. For more details, 
we refer the reader to \cite{CRMATH_2023__361_G8_1267_0}.

\subsection{Computational model}
The external domain \(\Dbox\in \mR^2\) is chosen as the square 
of size \(1 \ [m^2]\). The mesh is automatically adapted 
towards the current geometry in the course of the optimization 
process, where the finite element size varies from \(h_{\min} 
= 0.01 \ [m]\) to \(h_{\max} = 0.02 \ [m]\). The initial mesh is presented 
in Figure~\ref{fig:init}.

\begin{figure}[hbt]
	\centering
	\begin{minipage}[hbt]{0.35\linewidth}
		\centering\includegraphics[width=1\linewidth]{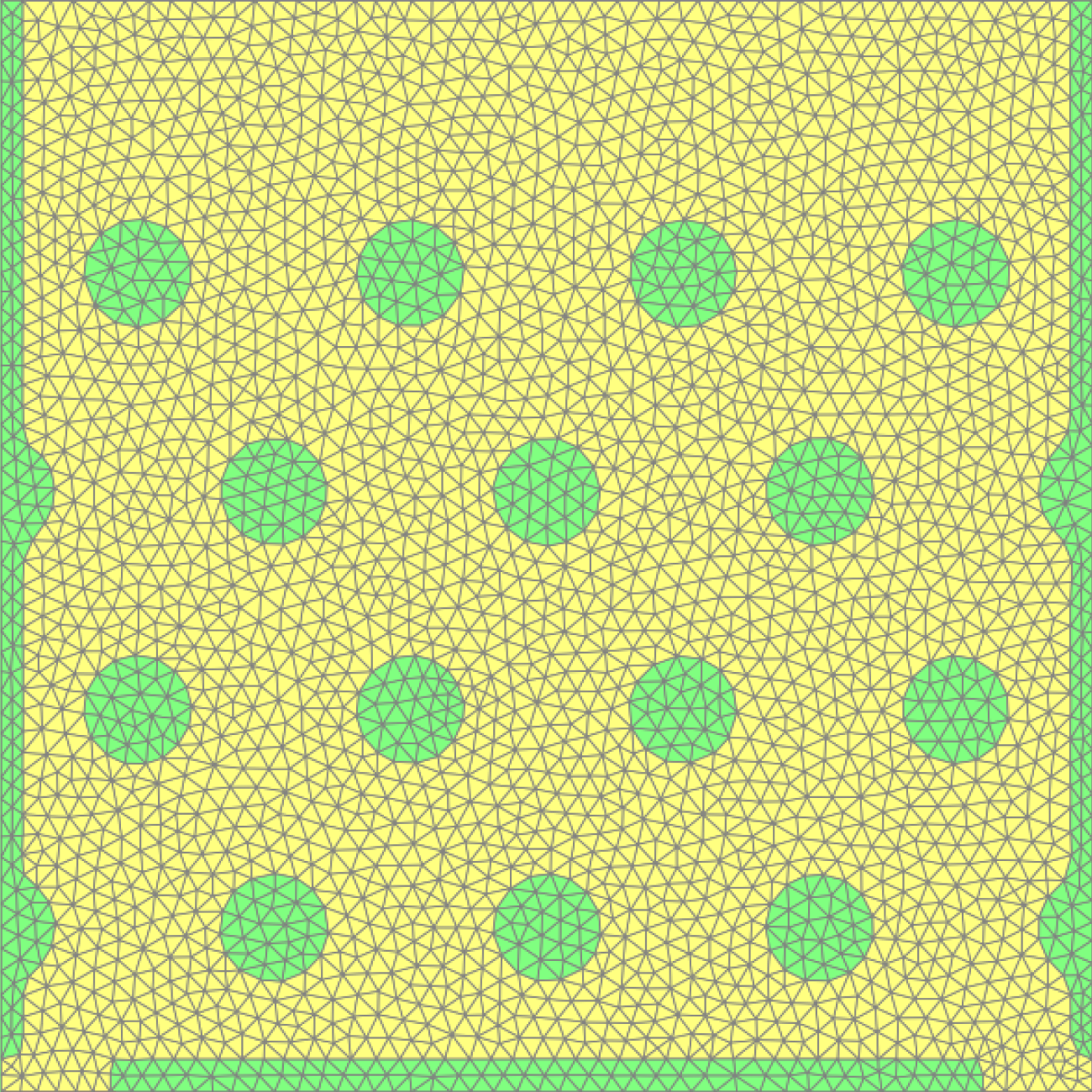} (a)
	\end{minipage}
	\hspace{10mm}
	\begin{minipage}[hbt]{0.35\linewidth}
		\centering\includegraphics[width=1\linewidth]{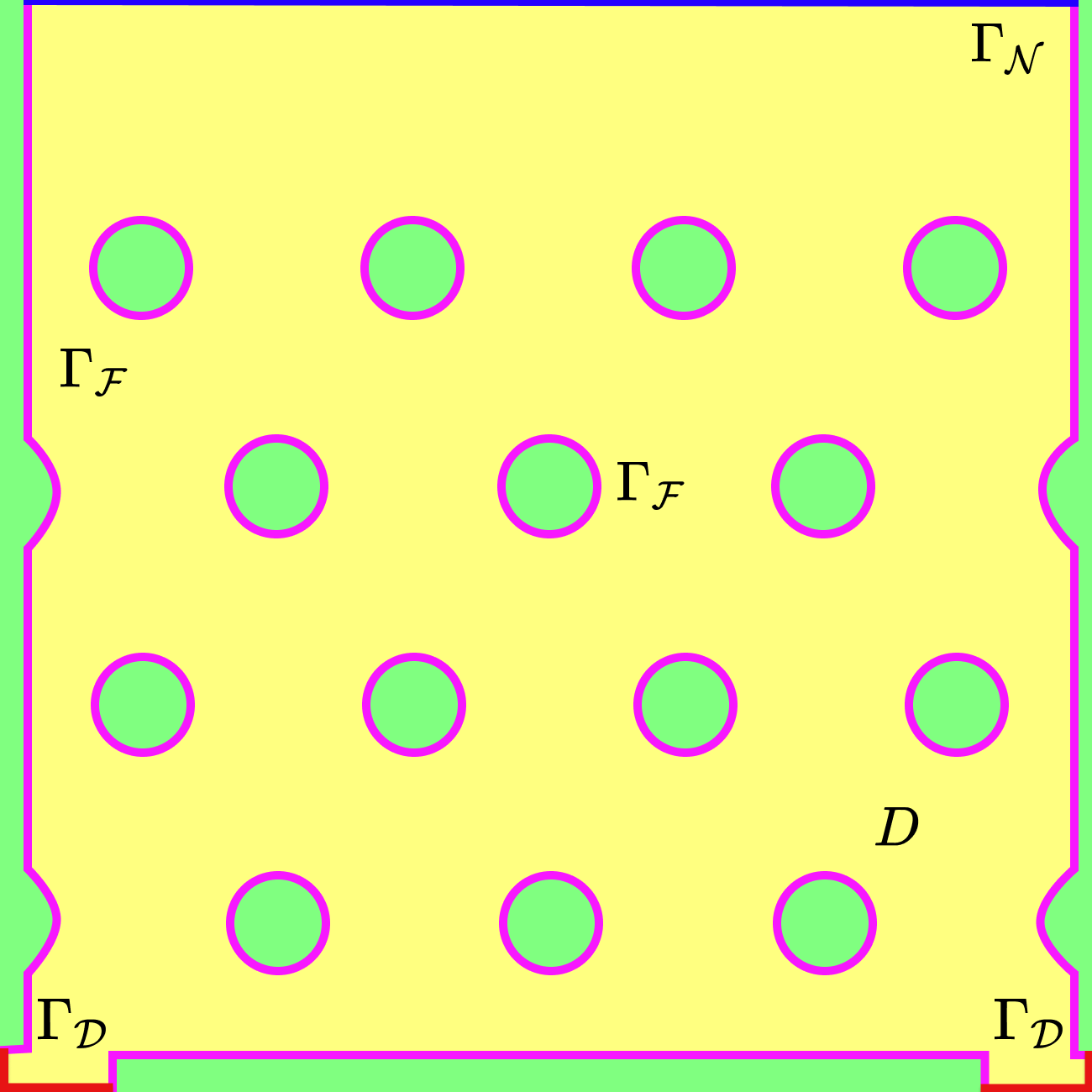} (b)
	\end{minipage}
	\caption{Initial setting. (a) Triangulation and (b) boundary conditions.}
	\label{fig:init}
\end{figure}

The final time is taken as \(\tf=1\). We solve the dynamic problems 
\eqref{eq:heat} and \eqref{eq:adjoint_temp} by using the Crank–Nicolson 
scheme. We apply a grading in the time discretization. To this end, 
we split the time interval \((0,t_f)\) into 20 subintervals, where the sizes 
of the first and last 5 subintervals decrease towards the boundaries with 
a factor of \(1/2\), i.e., our time steps look like \(h_t/2, h_t/4, \dots, h_t/32\). 
The intervals in the middle are of a fixed size \(h_t\). At each time step, 
we apply a fixed-point iteration to solve the corresponding nonlinear 
problem.

The material parameters employed in the thermoelasticity model are 
chosen to simulate the behavior of GH4099, a superalloy widely 
utilized in the aerospace industry \cite{tang2023topology}. Accordingly,
the specific heat capacity \(\widetilde{C}(T)\), the thermal conductivity 
\(k(T)\), the Young's modulus \(E(T)\), and the thermal expansion 
coefficient \(\alpha(T)\) are given by
\begin{align*}
	\widetilde{C}(T) &:= 429.46 + 0.277\cdot T - 1.67\cdot10^{-6} \cdot T^2 \ 
	[\text{J}\cdot\text{kg}^{-1}\cdot\text{\textcelsius}^{-1}], \\[1ex]
	k(T) &:= 8.7245 + 0.0183\cdot T + 3.14\cdot10^{-7} \cdot T^2 \ 
	[\text{W$\cdot$m}^{-1}\cdot\text{\textcelsius}^{-1}],\\[1ex]
	E(T) &:= 217.3966 + 0.0223\cdot T - 1.3741\cdot 10^{-4}\cdot T^2 \ 
	[\text{GPa}],\\[1ex]
	\alpha(T) &:= (11.96 + 0.0014\cdot T + 3.48\cdot10^{-6} \cdot T^2)
	\cdot10^{-6} \ [ \text{\textcelsius}^{-1}],
\end{align*}
while the mass density \(\widetilde{\rho}\) and the Poisson ratio 
\(\nu\) are independent of the temperature $T$:
\[
	\widetilde{\rho} := 8190 \ [\text{kg}\cdot\text{m}^{-2}], 
	\quad \nu := 0.3.
\]

We choose a high heat transfer coefficient \(\beta = 500 \ 
[\text{W$\cdot$m}^{-2}\cdot\text{\textcelsius}^{-1}]\) and consider 
the situation that there is  no thermal source inside the structure, 
i.e., \(Q = 0 \ [\text{W$\cdot$m}^{-3}]\). The initial temperature is 
taken as \(\Tin=0\ [\text{\textcelsius}]\). The external temperature 
is modelled as a smooth traveling front propagating through 
\(D_{\text{box}}\) in the \(x_2\)-direction (see Figure \ref{fig:temp_ext}).
Precisely, it is defined as
\[
\Tex(t,\bx) := \frac{1}{2}\widetilde T_{\text{ex}} 
\left( 1 + \tanh \left( \frac{t/t_f - x_2}{0.05} \right) \right)
\]
with \(\ \widetilde T_{\text{ex}} = 1000 \ [\text{\textcelsius}]\). 
Finally, the body force is \(\bf=(0,0) \ [\text{N$\cdot$m}^{-3}]\) 
and the surface force is \(\bg=(0,-0.5) \ [\text{GPa}]\). Note 
that this model simulates a case in which the structural 
component is located in a hot environment.

\begin{figure}[hbt]
	\centering
	\begin{minipage}[hbt]{0.3\linewidth}
		\centering\includegraphics[width=1\linewidth]{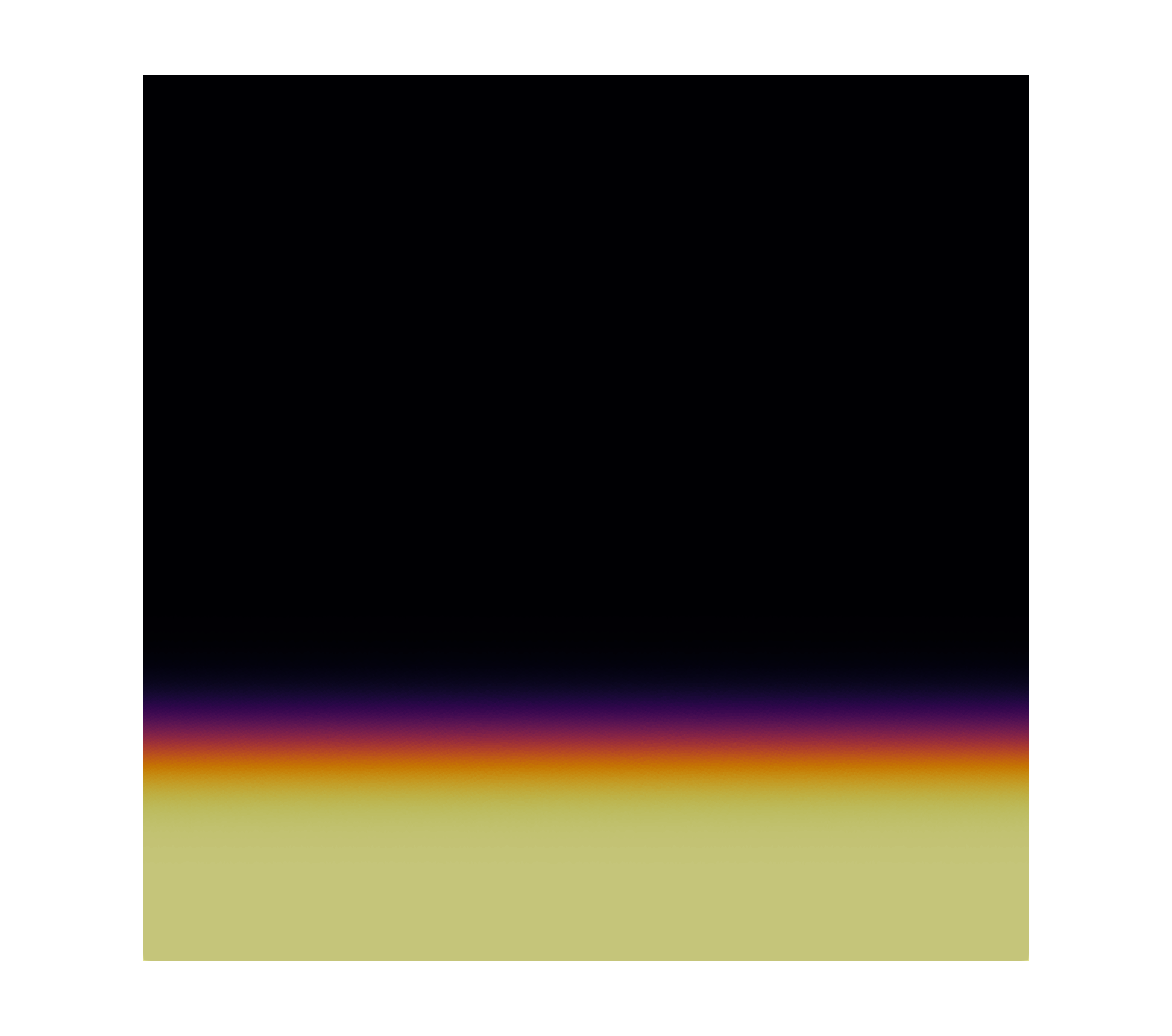} (a)
	\end{minipage}
	\begin{minipage}[hbt]{0.3\linewidth}
		\centering\includegraphics[width=1\linewidth]{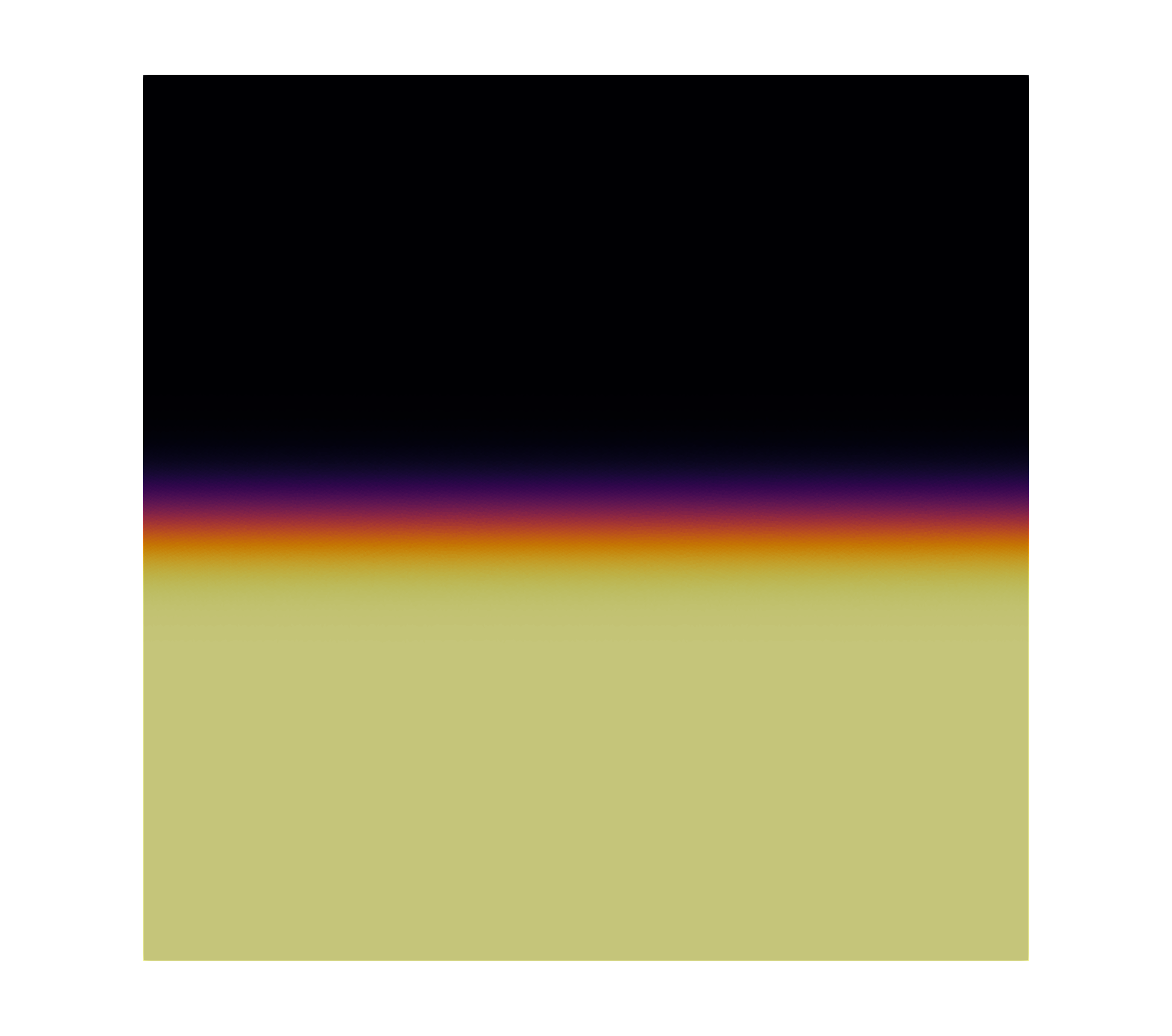} (b)
	\end{minipage}
	\begin{minipage}[hbt]{0.3\linewidth}
		\centering\includegraphics[width=1\linewidth]{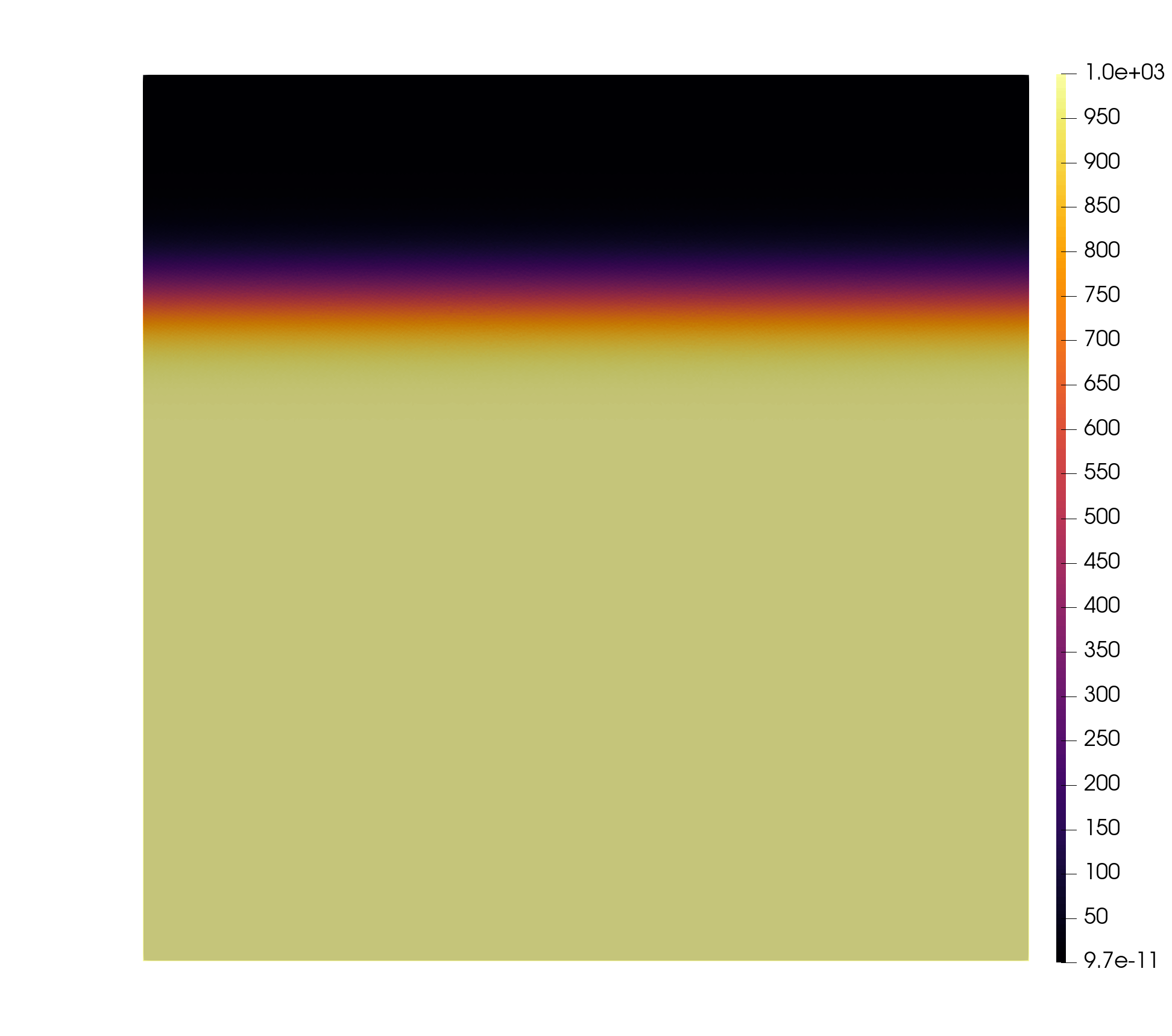} (c)
	\end{minipage}
	\caption{The external temperature at time (a): \(t=0.25\), 
	(b):  \(t=0.5\), (c): \(t=0.75\).}
	\label{fig:temp_ext}
\end{figure}

\subsection{Numerical results}
We next perform two numerical experiments to illustrate 
our method on one hand and to validate it on the other hand. 
To this end, we first minimize the compliance under a volume 
constraint. Afterwards, we minimize the volume under a constraint
on the $L^2$-norm of the von Mises stress. 

\subsubsection{Compliance minimization under a volume constraint}
We consider the minimization of the compliance 
\[
\F_\textrm{comp}(D) := \int_0^\tf\int_D \sigma(T_D, \bu_D) : 
	\varepsilon(\bu_D) \dx\dt
\]
subject to a constraint on the volume 
\[
\operatorname{Vol}(D) := \int_D \dx
\]
of the body, that is
\begin{equation}\label{eq:num1}
	\underset{D\subset\Dbox}{\text{minimize}}\quad 
	\F_\textrm{comp}(D)\quad\text{subject to}\quad 
	\operatorname{Vol}(D)\le \tau_1.
\end{equation}
Thus, according to Theorem \ref{th:shape_deriv}, the shape
derivatives are given by
\[
	\operatorname{Vol}'(D) = \int_{\GF} \btheta\cdot\bn\ds
\]
and
\begin{equation*}
	\begin{aligned}
		\F_\textrm{comp}'(D)\langle\btheta\rangle =\int_0^\tf \int_{\GF} \Big(
		\sigma(T_D, \bu_D)& : \varepsilon(\bu_D-\bw_D) + \bf\cdot\bw_D + QR_D \\[1ex] 
		&- \rho(T_D)\partial_t T_D R_D  - k(T_D)\bnabla T_D\cdot\bnabla R_D\\[1ex]
		&+\beta(\mathcal{H}-\beta/k(T_D))(\Tex - T_D)
		R_D\Big)\Big(\btheta\cdot\bn\Big)\ds\dt.
	\end{aligned}
\end{equation*}
Herein, \( T_D \in \Win\big((0,\tf),D\big)\) is the solution 
of \eqref{eq:heat} and \(\bu_D\in \HD((0,tf),D)^d\) 
is the solution of \eqref{eq:elasticity}, \(\bw_D\in \HD((0,tf),D)^d\) 
satisfies 
\begin{equation*}
	\left\{\;
	\begin{aligned}
		-\div(\mC(T_D):\varepsilon(\bw_D)) &= -2\div(\sigma(T_D, \bu_D))
		&&\text{in}\quad  (0,\tf)\times D, \\[1ex]
		(\mC(T_D):\varepsilon(\bw_D))\bn &=2\sigma(T_D, \bu_D) \bn  
		&&\text{on}\quad  (0,\tf)\times \GR,
	\end{aligned}
	\right.
\end{equation*}
while \(R_D\in \Wf\big((0,\tf),D\big)\) satisfies 
\begin{equation*}
	\left\{\;
	\begin{aligned}
		\rho(T_D)\partial_t R_D+\div\big(k(T_D) \bnabla R_D\big) 
		- \rho'(T_D)\partial_t T_D R_D& \\
		- k'(T_D)\bnabla T_D\cdot\bnabla R_D 
		&= \widetilde{Q} &&\text{in}\quad (0,\tf)\times D, \\[1ex]
		\big(k(T_D) \bnabla R_{D}\big)\cdot \bn  
		&= 0
		&&\text{on}\quad (0,\tf)\times\GN, \\[1ex]
		\big(k(T_D) \bnabla R_{D}\big)\cdot \bn + \beta R_D 
		&=  0
		&&\text{on}\quad (0,\tf)\times\GF, \\[1ex]
	\end{aligned}
	\right.
\end{equation*}
where 
\[
\widetilde{Q} := \sigma^\ast(T_D, \bu_D):\varepsilon(\bu_D-\bw_D) + \bB(T_D):\varepsilon(\bu_D-\bw_D)
\] 
and \( \sigma^\ast(T_D, \bu_D)\) is defined in \eqref{eq:deriv_disp_not}. 

By setting the threshold \(\tau_1=0.4\), we obtain the numerical 
results found in Figure~\ref{fig:num_compliance} after 150 gradient 
steps of the shape optimization algorithm. At the beginning, the 
algorithm aims to satisfy the constraint, so the body’s volume 
decreases while the compliance naturally increases. This is followed 
by a phase of minimizing the compliance under the satisfied constraint, 
during which the body changes its shape while maintaining its volume. 

\begin{figure}[htb]
	\centering
	\begin{minipage}[hbt]{0.45\linewidth}
		\centering\includegraphics[width=1\linewidth]{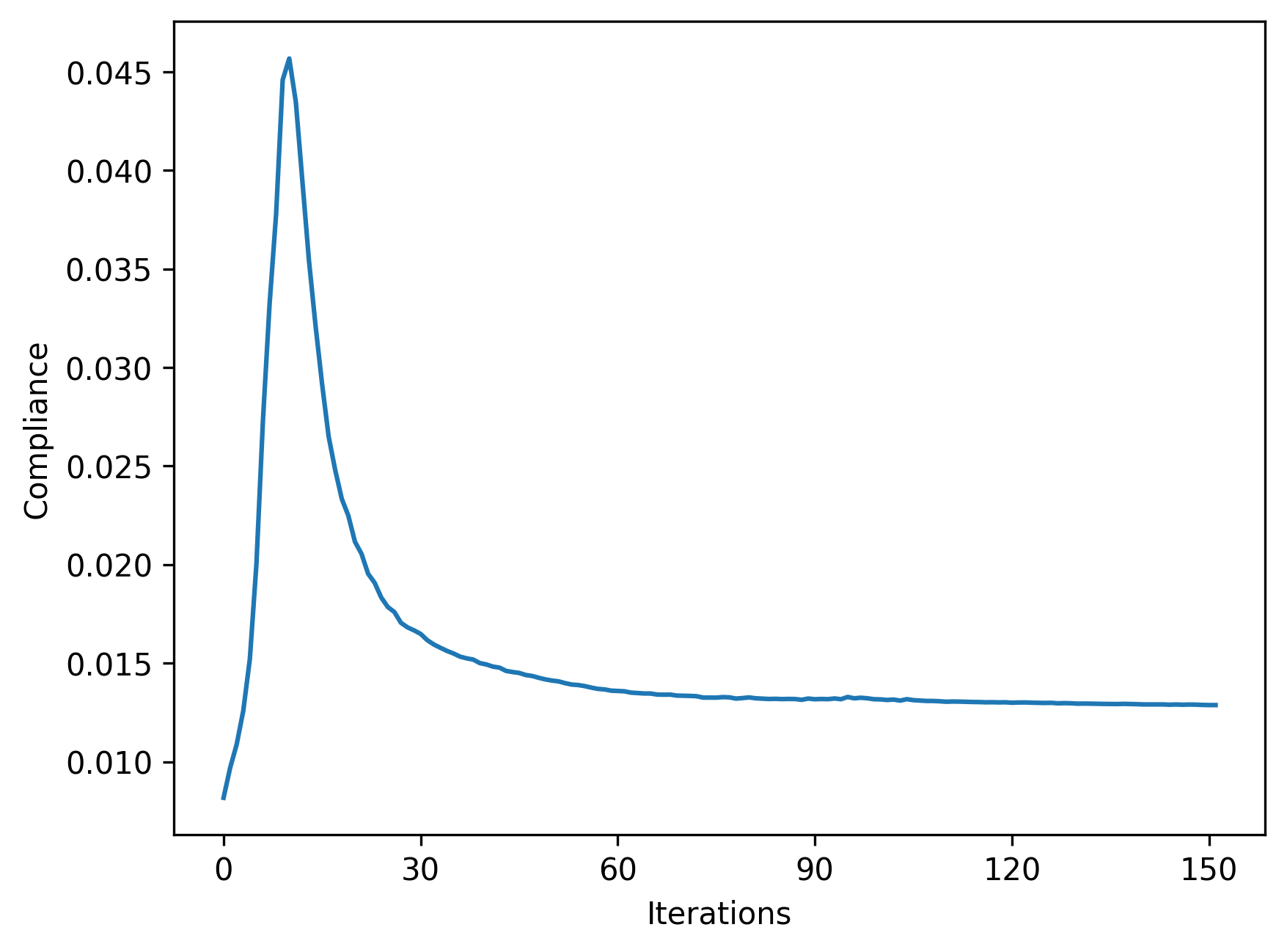} (a)
	\end{minipage}
	\begin{minipage}[hbt]{0.45\linewidth}
		\centering\includegraphics[width=1\linewidth]{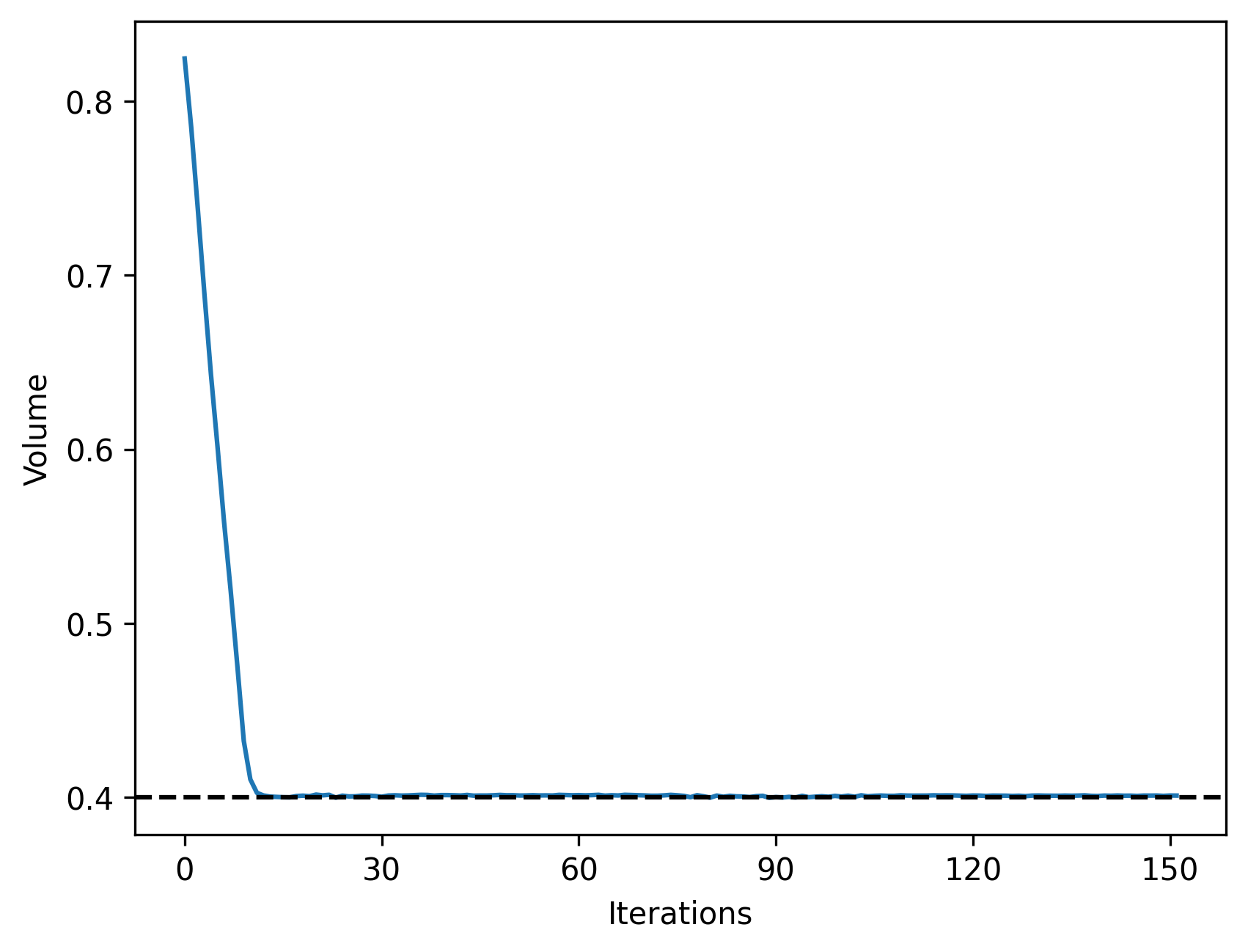} (b)
	\end{minipage}\\[2ex]
	\begin{minipage}[hbt]{0.45\linewidth}
		\centering\includegraphics[width=1\linewidth]{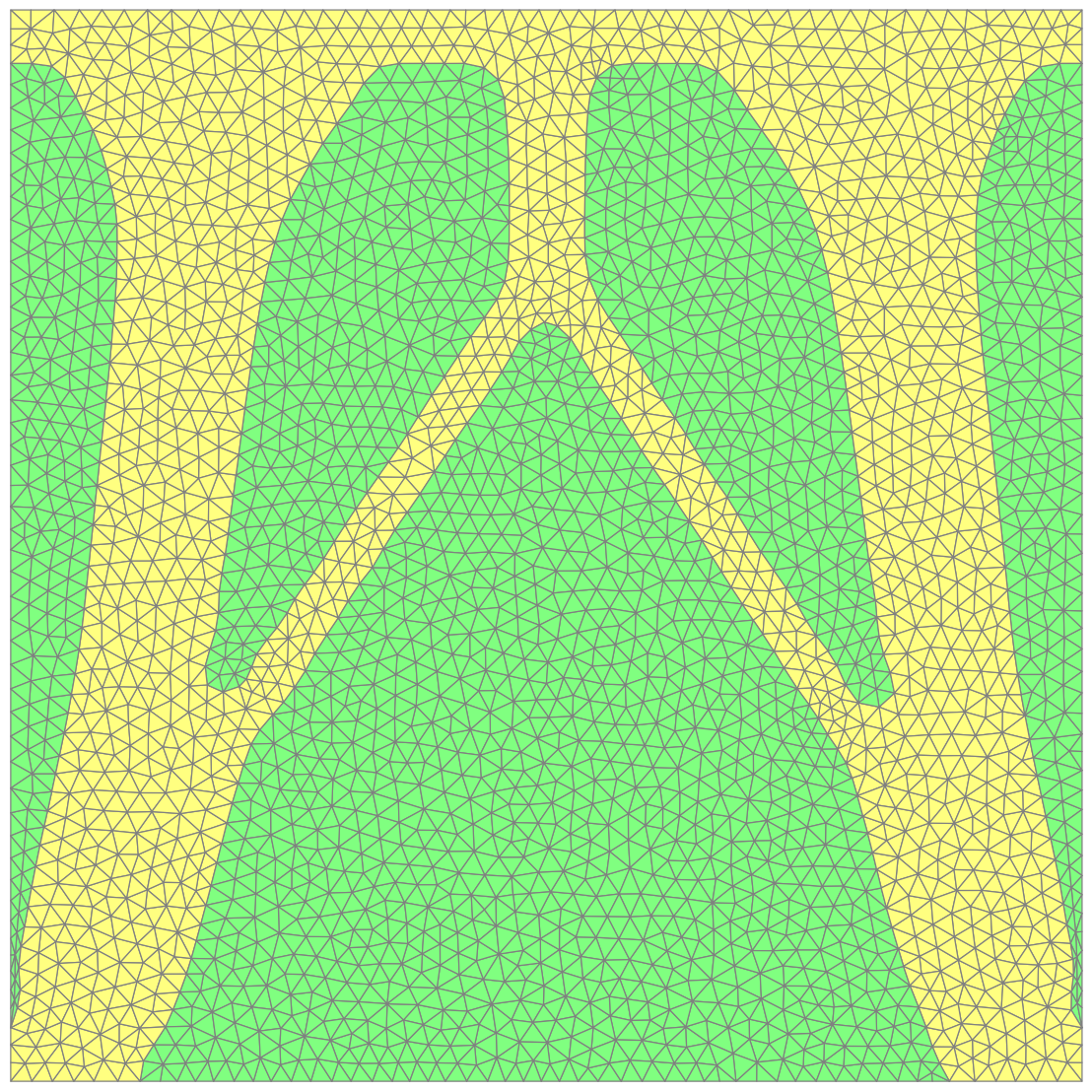} (c)
	\end{minipage}
	\caption{Experiment 1: Compliance minimization under a 
	volume constraint. Top row: convergence histories of (a) 
	the objective functional \(\F_\textrm{comp}(D)\) (compliance) 
	and (b) the constraint functional \(\operatorname{Vol}(D)\) 
	(volume). Bottom row: (c) final shape.}
	\label{fig:num_compliance}
\end{figure}

\subsubsection{Volume minimization under a von Mises stress constraint}
In our second experiment, we consider the minimization of the volume 
of the body subject to a constraint on the \(L^2\)-norm of the von Mises 
stress, which is referred to as the stress intensity and which is defined as
\[
	\sigma_\text{VM}(T, \bu) = \sqrt{\sigma_d(T, \bu):
	\sigma_d(T, \bu)},\quad\text{where}\quad\sigma_d(T, \bu)=\mu(T)
	\big(2\varepsilon(\bu) - \div(\bu)\bI\big).
\]
Hence, we consider the following shape optimization problem
\begin{equation}\label{eq:num2}
	\underset{D\subset\Dbox}{\text{minimize}}\quad 
	\operatorname{Vol}(D)\quad\text{subject to}\quad 
	\F_\textrm{VM}(D)\leq \tau_2,
\end{equation}
where
\[
	\F_\textrm{VM}(D) := \int_0^\tf\int_D|\sigma_\text{VM}(T_D, \bu_D)|^2\dx\dt =
	 \int_0^\tf\int_D \sigma_d(T_D, \bu_D) : \sigma_d(T_D,\bu_D) \dx\dt.
\]
According to Theorem \ref{th:shape_deriv}, the shape derivative 
is given by
\begin{equation*}
	\begin{aligned}
		\F_\textrm{VM}'(D)\langle\btheta\rangle = \int_0^\tf\int_{\GF} 
		\Big(\sigma_d(T_D, \bu_D) &: \sigma_d(T_D,\bu_D) 
		- \sigma(T_D,\bu_D) : \varepsilon(\bw_D) + \bf\cdot\bw_D  \\[1ex]
		&+ QR_D- \rho(T_D)\partial_t T_D R_D  - k(T_D)\bnabla T_D\cdot\bnabla R_D\\[1ex]
		&+\beta(\mathcal{H}-\beta/k(T_D))(\Tex - T_D)
		R_D\Big)\Big(\btheta\cdot\bn\Big)\ds\dt.
	\end{aligned}
\end{equation*}
Herein, \( T_D \in \Win\big((0,\tf),D\big)\) is the solution 
of \eqref{eq:heat} and \(\bu_D\in \HD((0,tf),D)^d\) 
is the solution of \eqref{eq:elasticity}, \(\bw_D\in \HD((0,tf),D)^d\) 
satisfies 
\begin{equation*}
	\left\{\;
	\begin{aligned}
		-\div\big(\mC(T_D):\varepsilon(\bw_D)\big) &= - 4\div\big(\mu(T_D)
		\sigma_d(T_D, \bu_D)\big)&&\text{in}\quad  (0,\tf)\times D, \\[1ex]
		\big(\mC(T_D):\varepsilon(\bw_D)\big)\bn &= 4\mu(T_D)
		\sigma_d(T_D, \bu_D) \bn &&\text{on}\quad  (0,\tf)\times \GR,
	\end{aligned}
	\right.
\end{equation*}
while \(R_D\in \Wf\big((0,\tf),D\big)\) satisfies 
\begin{equation*}
	\left\{\;
	\begin{aligned}
		\rho(T_D)\partial_t R_D+\div\big(k(T_D) \bnabla R_D\big) 
		- \rho'(T_D)\partial_t T_D R_D& \\
		- k'(T_D)\bnabla T_D\cdot\bnabla R_D
		&= \widetilde{Q}&&\text{in}\quad (0,\tf)\times D, \\[1ex]
		\big(k(T_D) \bnabla R_{D}\big)\cdot \bn  
		&=  0
		&&\text{on}\quad (0,\tf)\times\GN, \\[1ex]
		\big(k(T_D) \bnabla R_{D}\big)\cdot \bn + \beta R_D 
		&=  0
		&&\text{on}\quad (0,\tf)\times\GF, 		
	\end{aligned}
	\right.
\end{equation*}
where 
\[
\widetilde{Q} := \sigma^\ast(T_D,\bu_D):\varepsilon(\bw_D) 
+ \bB(T_D):\varepsilon(\bw_D) - 2\sigma_d^\ast(T_D,\bu_D):\sigma_d(T_D, \bu_D),
\] 
with \(\sigma^\ast_d(T_D, \bu_D)=\mu'(T_D)\big(2\varepsilon(\bu_D) - \div(\bu_D)\bI\big)\)
and \(\sigma^\ast(\bu_D,T_D)\) defined as in \eqref{eq:deriv_disp_not}. 

By setting the threshold \(\tau_2=1.5\), we obtain the numerical 
results found in Figure~\ref{fig:num_vmstress} after 150 gradient 
steps of the shape optimization algorithm. Since, in this case, the 
initial shape already satisfies the constraint on the \(L^2\)-norm 
of the von Mises, we proceed directly to minimizing the volume. 
As in the previous case, a decrease in the volume of the body 
increases the stress intensity. It causes oscillations around the 
threshold, so the algorithm adapts the shape to satisfy the 
constraint. The peaks visible in the \(L^2\)-norm of the von Mises 
stress graph represent changes in the topology of the body during 
the optimization process. 

\begin{figure}[hbt]
	\centering
	\begin{minipage}[hbt]{0.45\linewidth}
		\centering\includegraphics[width=1\linewidth]{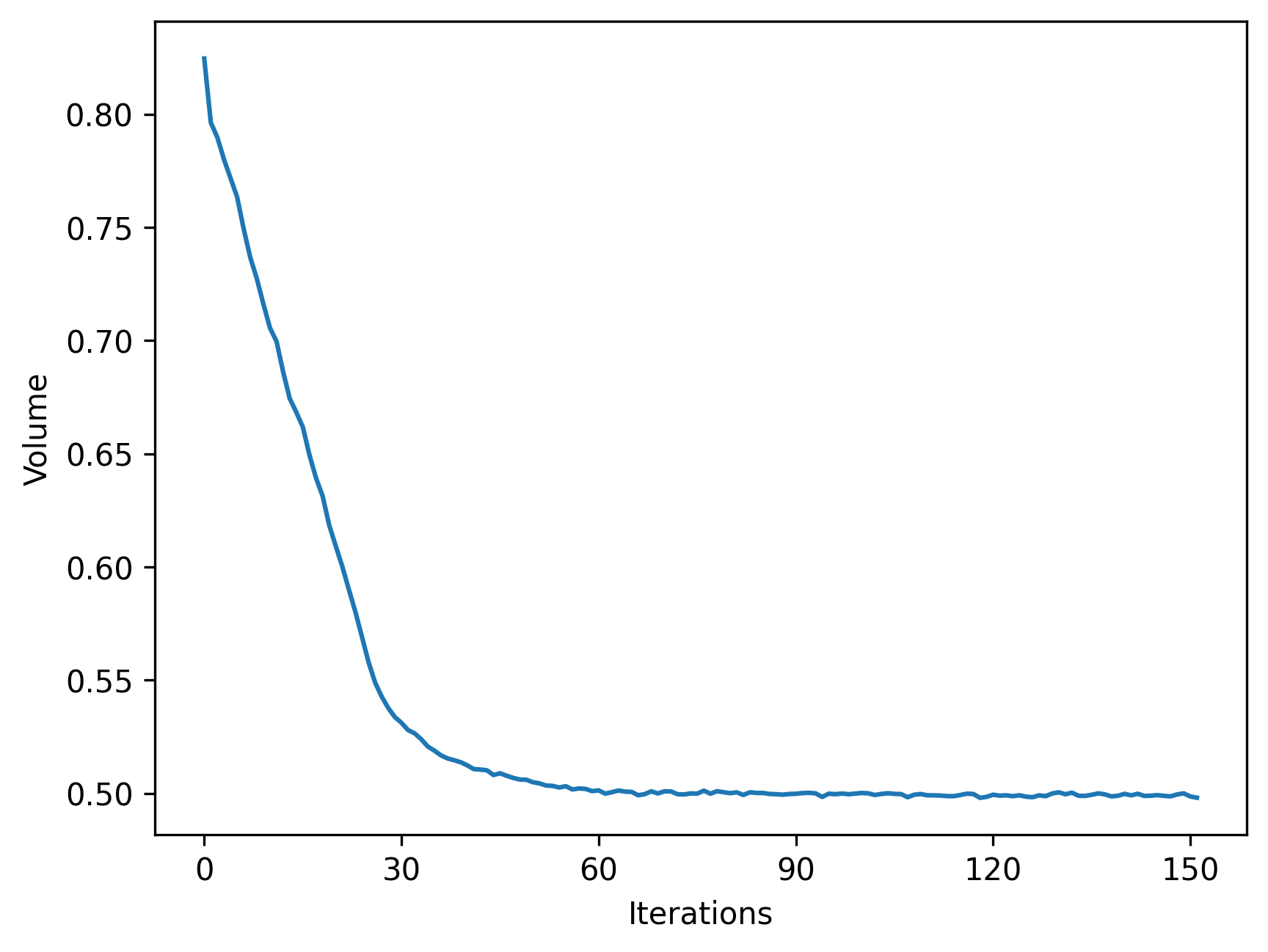} (a)
	\end{minipage}
	\begin{minipage}[hbt]{0.45\linewidth}
		\centering\includegraphics[width=1\linewidth]{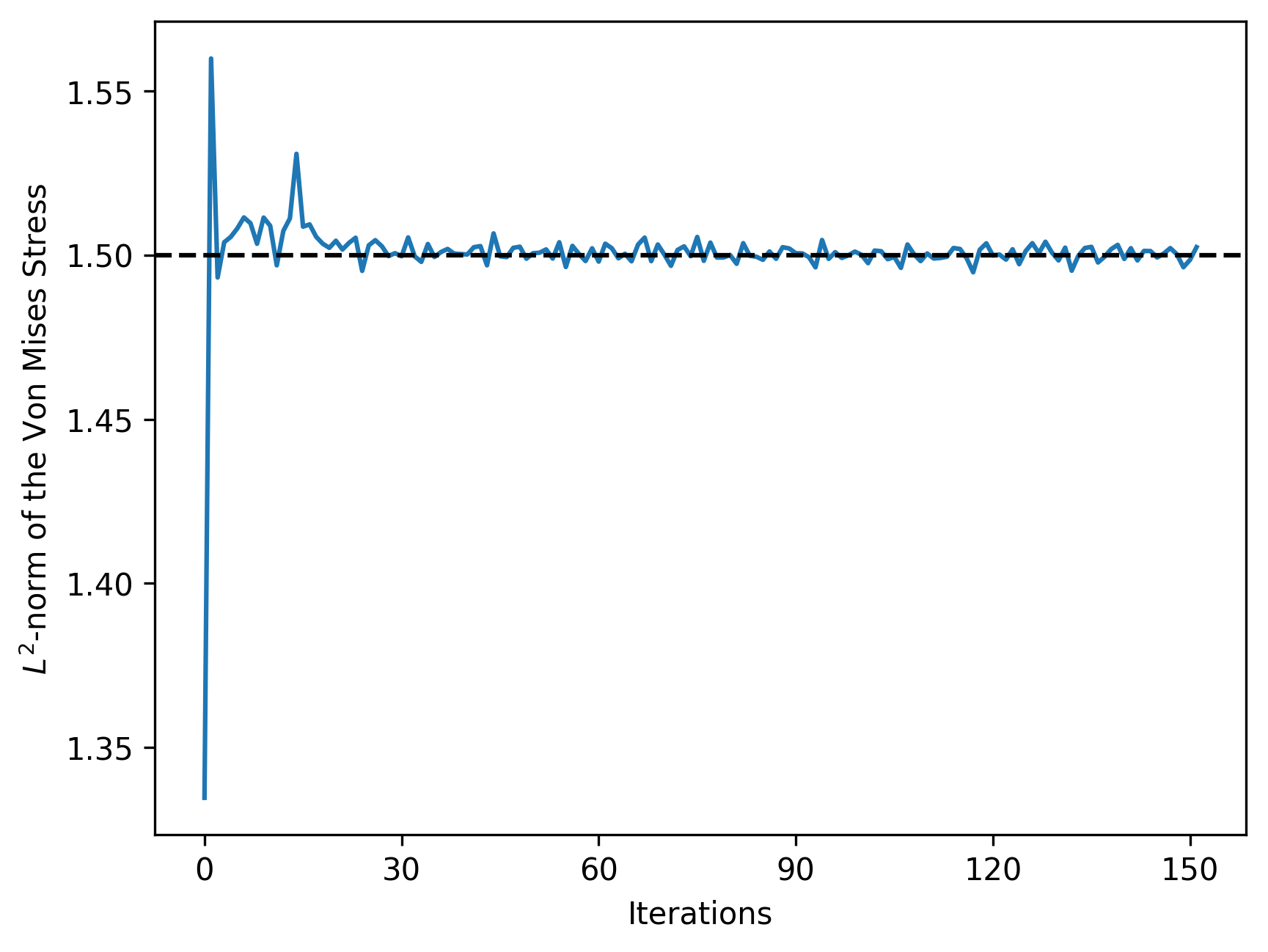} (b)
	\end{minipage}\\[2ex]
	\begin{minipage}[hbt]{0.45\linewidth}
		\centering\includegraphics[width=1\linewidth]{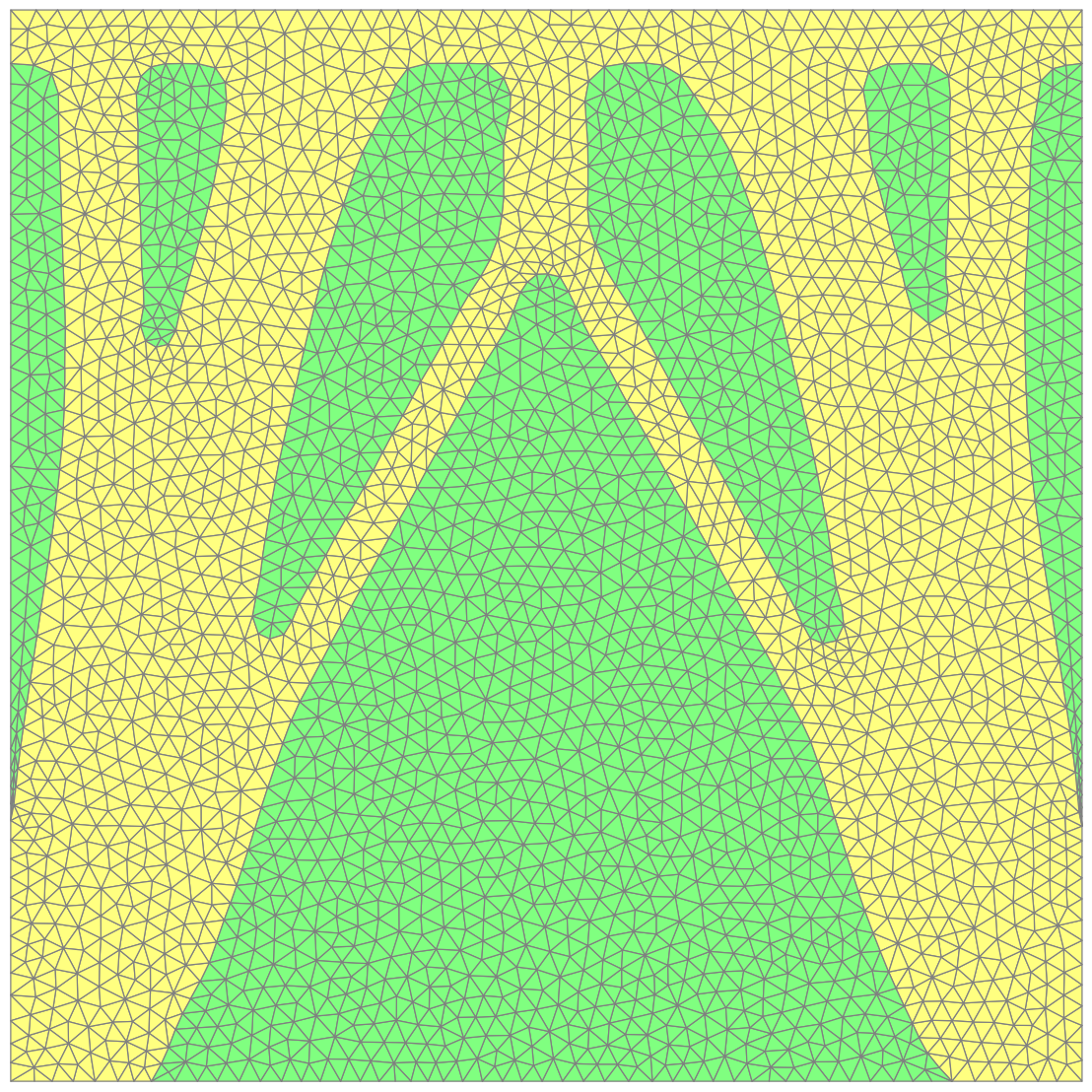} (c)
	\end{minipage}
	\caption{Experiment 2: Volume minimization under a von Mises
	stress constraint. Top row: convergence histories of (a)
	objective functional \(\operatorname{Vol}(D)\) (volume) and (b)
	constraint functional  \(\F_\textrm{VM}(D)\) (\(L^2\)-norm of the
	von Mises stress). Bottom row: (c) final shape.} 
	\label{fig:num_vmstress}
\end{figure}

\section{Conclusion}
\label{sec:conclusio}
The present article dealt with a gradient-based shape optimization 
problem for a nonlinear thermoelasticity model. Especially, we 
established the existence of the shape derivative for a generic shape 
functional of domain integral type. In our numerical realization, we 
combined the finite element method for solving the underlying 
boundary value problems with the level-set method to represent the 
domain. To illustrate and validate the proposed approach, we 
considered two numerical test cases: the minimization of the 
compliance under a volume constraint on one hand and the 
minimization of the volume under an \(L^2\)-norm constraint on the 
von Mises stress on the other hand. These experiments showed that 
the present approach is feasible and applicable in engineering practice.

\nolinenumbers 
\bibliographystyle{plain} 
\bibliography{references}
\end{document}